\newtheorem*{rep@theorem}{\rep@title}
\newcommand{\newreptheorem}[2]{%
\newenvironment{rep#1}[1]{%
 \def\rep@title{#2 \ref{##1}}%
 \begin{rep@theorem}}%
 {\end{rep@theorem}}}
\newtheorem{thm}{Theorem}[section]
\newtheorem{lemma}[thm]{Lemma}
\newtheorem{prop}[thm]{Proposition}
\newtheorem{claim}[thm]{Claim}
\newtheorem*{thm*}{Theorem}
\newtheorem*{lemma*}{Lemma}
\newtheorem*{prop*}{Proposition}
\newtheorem*{corr*}{Corrolary}
\newtheorem*{claim*}{Claim}
\theoremstyle{remark}
\newtheorem{quest}[thm]{Question}
\newtheorem*{rmk*}{Remark}
\newtheorem*{conj*}{Conjecture}
\newtheorem*{quest*}{Question}
\theoremstyle{definition}
\newtheorem{defn}[thm]{Definition}
\newtheorem{exmp}[thm]{Example}
\newtheorem*{defn*}{Definition}
\newtheorem*{exmp*}{Example}
\newtheorem{theorem}{Theorem}
\newenvironment{equ*}[1]{\begin{IEEEeqnarray*}{#1}}{\end{IEEEeqnarray*}}
\newcommand{\R}{\mathbb{R}}
\newcommand{\Z}{\mathbb{Z}}
\newcommand{\N}{\mathbb{N}}
\newcommand{\inj}{\hookrightarrow}
\newcommand{\col}{\colon}
\newcommand{\att}{\texttt{a}}
\newcommand{\btt}{\texttt{b}}
\newcommand{\dtt}{\texttt{d}}
\newcommand{\xtt}{\texttt{x}}
\newcommand{\ytt}{\texttt{y}}
\newcommand{\Hrm}{\mathrm{H}}
\author{Nicolaus Heuer}
\title{Cup Product in Bounded Cohomology of the Free Group}
\address{Department of Mathematics\\
  University of Oxford}
\email[N.~Heuer]{heuer@maths.ox.ac.uk}
\date{\today}
\begin{document}

\begin{abstract}
The theory of bounded cohomology of groups has many applications.
A key open problem is to compute the full bounded cohomology $\Hrm_b^n(F, \R)$ of a non-abelian free group $F$ with trivial real coefficients.
It is known that $\Hrm_b^n(F,\R)$ is trivial for $n=1$ and uncountable dimensional for $n=2,3$, but $\Hrm_b^n(F,\R)$ remains unknown for any $n \geq 4$.
For $n=4$, one may construct classes by taking the cup product $\alpha \smile \beta \in \Hrm_b^4(F, \R)$ between two $2$-classes $\alpha, \beta \in \Hrm^2_b(F, \R)$. 
However, we show that all such cup products are trivial if $\alpha$ and $\beta$ are classes induced by the quasimorphisms defined by Brooks or Rolli.
\end{abstract}

\maketitle

\section{Introduction}

Bounded cohomology of groups was originally studied by Gromov in \cite{Gromov}. Since then bounded cohomology emerged as an indepenedent research field with many applications. These include stable commutator length (\cite{calegari:scl}), circle actions (\cite{circle-actions}) and the Chern Conjecture. See \cite{monod_invitation} for a survey and \cite{frigerio} for a book on bounded cohomology.

However, the bounded cohomology of a group $G$ is notoriously hard to explicitly compute, even for trivial real coefficients.
On the one hand, it is known that $\Hrm_b^n(G,\R)$ is trivial for all $n \in \N$ if $G$ is amenable.
On the other hand, $\Hrm_b^n(G,\R)$ is uncountable dimensional if $G$ is an acylindrically hyperbolic group and $n=2$ or $n=3$; see \cite{hull_osin}, \cite{soma} and \cite{fps}. 

This paper will exclusively focus on the bounded cohomology of non-abelian free groups $F$ with trivial real coefficients, denoted by $\Hrm_b^n(F, \R)$.
We note that $\Hrm_b^n(F,\R)$ is fully unknown for any $n \geq 4$.
Free groups play a distinguished r\^ole  in constructing non-trivial classes on other acylindrically hyperbolic groups. Due to a result by Frigerio, Pozzetti and Sisto, any non-trivial alternating class in $\Hrm_b^n(F, \R)$ may be promoted to a non-trivial class in $\Hrm_b^n(G,\R)$ where $G$ is an acylindrically hyperbolic group and $n \geq 2$; see Corollary 2 of \cite{fps}.

All classes in the second bounded cohomology of a non-abelian free group $F$ with trivial real coefficients arise as coboundaries of \emph{quasimorphisms} (see Subsection \ref{subsec:bounded cohomology}) i.e.\ for any $\omega \in \Hrm_b^2(F,\R)$ there is a quasimorphism $\phi \col F \to \R$ such that $[\delta^1 \phi] = \omega$.
There are many explicit constructions of quasimorphisms $\phi \col F \to \R$, most prominently the one defined by Brooks \cite{Brooks} and Rolli \cite{rolli}; see Subsection \ref{subsec:bounded cohomology}.
One may hope to construct non-trivial classes in $\Hrm^4_b(F, \R)$ by taking the cup product
$[\delta^1 \phi] \smile [\delta^1 \psi] \in \Hrm^4_b(F, \R)$ between two such quasimorphisms $\phi, \psi \col F \to \R$.
We will show that this approach fails.
\begin{theorem} \label{theorem:brooks and rolli}
Let $\phi, \psi \col F \to \R$ be two quasimorphisms on a non-abelian free group $F$ where each of $\phi$ and $\psi$ is either Brooks counting quasimorphisms on a non self-overlapping word or quasimorphisms in the sense of Rolli. 
Then $[\delta^1 \phi] \smile [\delta^1 \psi] \in \Hrm^4_b(F,\R)$ is trivial.
\end{theorem}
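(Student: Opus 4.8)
The plan is to work at the level of cochains and exhibit an explicit bounded primitive. Recall that a cup product $[\delta^1\phi] \smile [\delta^1\psi]$ is represented by the bounded $4$-cocycle $(\delta^1\phi) \smile (\delta^1\psi)$, where the cup product of cochains is given by the usual Alexander–Whitney formula on the bar resolution. Since $\phi$ is a quasimorphism, $\delta^1\phi$ is already a bounded $2$-cochain, and moreover $\delta^1\phi = \delta^1(\text{bounded part}) $ is not quite right — rather $\phi$ itself is unbounded, but $\delta^2(\delta^1\phi) = 0$, so the issue is to find a bounded $3$-cochain $\kappa$ with $\delta^3\kappa = (\delta^1\phi)\smile(\delta^1\psi)$. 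The first move I would make is the standard one: since $\delta^1(\phi\smile \delta^1\psi)$ relates to $(\delta^1\phi)\smile(\delta^1\psi)$ up to a sign, one gets $(\delta^1\phi)\smile(\delta^1\psi) = \pm\,\delta^3(\phi \smile \delta^1\psi)$ as \emph{unbounded} cochains; so the cohomology class vanishes in ordinary cohomology for trivial reasons, and the whole content is to replace the unbounded primitive $\phi \smile \delta^1\psi$ by a bounded one, i.e.\ to show the class is in the image of $\Hrm^4_b \to \Hrm^4_b$ coming from a bounded cochain — equivalently, to show the comparison-type obstruction vanishes.

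**Reducing to a defect computation.**
The key structural point I would exploit is that for the \emph{specific} quasimorphisms of Brooks (on non self-overlapping words) and Rolli, the homogeneous representative — or a carefully chosen inhomogeneous one — has a product-like or additive behaviour on special subsets of $F\times F$ that makes the $4$-cocycle $(\delta^1\phi)\smile(\delta^1\psi)$ decompose. Concretely, I expect one can write $\delta^1\phi = \delta^1 \bar\phi$ where $\bar\phi$ is a \emph{bounded} function off a ``diagonal-like'' locus, using that these quasimorphisms are defined by counting (for Brooks) or by summing a bounded function over the letters of the reduced word (for Rolli): in both cases $\phi(gh) - \phi(g) - \phi(h)$ depends only on the cancellation region of the pair $(g,h)$, which is of bounded size. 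I would make this precise by introducing, for each of the two families, an explicit bounded $1$-cochain-valued gadget measuring the overlap, and then compute $(\delta^1\phi)\smile(\delta^1\psi)$ on a $4$-tuple $(g_1,g_2,g_3,g_4)$ in terms of these overlaps. The hope — and I believe this is what the paper does — is that the resulting expression is visibly a coboundary $\delta^3\kappa$ with $\kappa$ built from these bounded overlap gadgets, because the cancellation data for $(g_1,g_2)$ and for $(g_3,g_4)$ interact only through a bounded ``middle'' piece.

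**The main obstacle and how I would handle it.**
The hard part will be controlling the combinatorics of cancellation for \emph{products} of several group elements simultaneously — the Alexander–Whitney formula for a $4$-cocycle forces one to understand $\phi$ evaluated at $g_1g_2g_3g_4$, $g_2g_3g_4$, etc., and the overlap/cancellation patterns of nested products do not simply add. For Rolli quasimorphisms this is more tractable because the defining sum is genuinely additive over letters and the defect is supported exactly on the (bounded) cancelled block; I would treat that case first and extract a clean formula for a bounded primitive $\kappa_\psi$. For Brooks quasimorphisms the non self-overlapping hypothesis on the counted word $w$ is doing essential work: it guarantees that occurrences of $w^{\pm1}$ in a concatenated reduced word either survive from the pieces or are created in a bounded neighbourhood of a single cancellation point, never ``spread out'' — so again the defect is bounded and local. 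I would then need to check that the two primitives can be chosen compatibly so that $(\delta^1\phi)\smile(\delta^1\psi) = \delta^3\kappa$ for a single bounded $\kappa$; this amounts to a bounded-cochain identity that I would verify by a direct (if tedious) expansion, organising the terms by which cancellation locus they see. The payoff is that boundedness of $\kappa$ is automatic from boundedness of all the overlap gadgets, and the proof is essentially a bookkeeping argument rather than anything requiring new bounded-cohomology machinery.
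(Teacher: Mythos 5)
Your overall strategy matches the paper's: write $(\delta^1\phi)\smile(\delta^1\psi)=\delta^3\tau$ for the obvious unbounded $3$-cochain $\tau=\phi\smile\delta^1\psi$, then try to correct $\tau$ by coboundaries so it becomes bounded, i.e.\ find a bounded $\beta$ with $\delta^3\beta=\delta^3\tau$. That is exactly the role of the paper's $\beta=\tau+\delta^2\eta+\delta^2\gamma$ in Theorem~\ref{thm:technical}, and your intuition that the relevant information is ``local to the cancellation region'' is the right driving idea. The paper also does, in effect, exactly what you suggest by treating the two families with different machinery: the decomposition $\Delta$ is chosen to be $\Delta_w$ when a Brooks quasimorphism on $w$ is present and $\Delta_{rolli}$ otherwise.

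The genuine gap is that boundedness of the corrected primitive is \emph{not} automatic from the locality of $\delta^1\phi$ and $\delta^1\psi$, and you do not supply the extra ingredient that makes it work. When you decompose $\phi(g)$ along the letters (Rolli) or the $w$-occurrences (Brooks) of $g$, you produce a sum like $\sum_j \phi(g_j)\,\omega(g_{j+1}\cdots g_k h, i)$ — this is the paper's $\zeta$ — whose number of terms is \emph{unbounded} in $g$. To prove that the relevant differences of such sums stay bounded (Proposition~\ref{prop:properties of zeta}, items (\ref{item:two sum zeta}) and (\ref{item:continuity of zeta})) one needs the terms to decay summably as the index $j$ moves away from the cancellation locus. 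That is precisely the paper's $\Delta$-continuity notion: a summable sequence $(s_j)$ controlling $|\omega(g,h)-\omega(g',h')|$ in terms of the distance $N_\Delta$. Nothing in your ``overlap gadget'' setup captures this quantitative decay; a naive ``direct, if tedious, expansion'' will reveal telescoping sums of unboundedly many terms with no a priori reason to stay bounded. You would also still need to find the second correction term — the paper's $\gamma$, built from the quasimorphism $\theta(g)=\zeta(g,g^{-1},g)+\zeta(g^{-1},1,g)$ evaluated on the common $2$-path — and its construction is not something that falls out of bookkeeping; it is the analogue of the ``$\alpha(\dtt_j,\dtt_j^{-1},\cdot,\cdot)$'' cancellation in the classical argument that $\Hrm^3(F,\R)=0$, adapted to the quasi-cocycle setting. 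In short: correct target, correct heuristic, but the two load-bearing ideas (a quantitative continuity of the $2$-cocycle with respect to a decomposition, and the $\eta$/$\gamma$ corrections modeled on the algebraic proof for genuine $3$-cocycles) are missing.
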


 We note that Michelle Bucher and Nicolas Monod have independently proved the vanishing of the cup product between the classes induced by Brooks quasimorphisms with a different technique; see \cite{BucherMonod}.
 
Theorem \ref{theorem:brooks and rolli} will follow from a more general vanishing Theorem. 
For this, we will first define \emph{decompositions} (see Definition \ref{def:decomposition}) which are certain maps $\Delta$ that assign to each element $g \in F$ a finite sequence $(g_1, \ldots, g_n)$ of arbitrary length with $g_j \in F$ and such that $g = g_1 \cdots g_n$ and there is no cancellation between the $g_j$.
We then define two new classes of quasimorphisms, namely \emph{$\Delta$-decomposable quasimorphisms}  (Definition \ref{defn:decomposable quasimorphism}) and \emph{$\Delta$-continuous quasimorphisms} (Definition \ref{def:continuous}).
Each Brooks and Rolli quasimorphism will be both $\Delta$-decomposable and $\Delta$-continuous with respect to some decomposition $\Delta$.
We will show:

\begin{theorem} \label{thm:main}
Let $\Delta$ be a decomposition of $F$ and let $\phi, \psi \col F \to \R$ be quasimorphisms such that $\phi$ is $\Delta$-decomposable and $\psi$ is $\Delta$-continuous. Then $[\delta^1 \phi] \smile [\delta^1 \psi]  \in \Hrm^4_b(F, \R)$ is trivial.
\end{theorem}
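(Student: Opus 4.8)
The plan is to produce an explicit bounded $3$-cochain $\tau \in C_b^3(F,\R)$ with $\delta^3\tau = \delta^1\phi \smile \delta^1\psi$; since $\delta^1\phi\smile\delta^1\psi$ is a bounded $4$-cocycle representing $[\delta^1\phi]\smile[\delta^1\psi]$, this forces the class to vanish. Working in the inhomogeneous bar complex, the cup product of the two $2$-cocycles is
\[
(\delta^1\phi \smile \delta^1\psi)(g_1,g_2,g_3,g_4) = \delta^1\phi(g_1,g_2)\cdot \delta^1\psi(g_3,g_4),
\]
which is bounded precisely because $\delta^1\phi$ and $\delta^1\psi$ are bounded. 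The starting observation is that this cocycle already admits the \emph{unbounded} primitive $\phi\smile\delta^1\psi$: one has $\delta^3(\phi\smile\delta^1\psi) = \delta^1\phi\smile\delta^1\psi$, the remaining Leibniz term vanishing since $\delta^2(\delta^1\psi)=0$. Because $\Hrm^n(F,\R)=0$ for $n\geq 2$, any two primitives of the same cocycle differ by a coboundary, so it suffices to exhibit an (\emph{a priori} unbounded) $2$-cochain $\sigma$ for which $\tau := \phi\smile\delta^1\psi + \delta^2\sigma$ is bounded — the entire task being to construct such a $\sigma$ canonically from the decomposition $\Delta$.

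First I would unwind the hypotheses. $\Delta$-decomposability of $\phi$ should provide, up to a uniform error, an expansion of $\phi(g)$ as a sum of contributions over the pieces of $\Delta(g) = (g_1,\dots,g_n)$; since $\Delta$ introduces no cancellation between consecutive pieces, this expansion is compatible with the merges $g_i \mapsto g_i g_{i+1}$ occurring in the bar differential. $\Delta$-continuity of $\psi$ should say, dually, that the defect of $\psi$ along a concatenation of pieces is a bounded, locally determined quantity, so that $\psi$ restricted to the pieces of $\Delta$ behaves like a coboundary up to bounded error. Using these, I would define $\sigma$ — hence $\tau$ — as a sum of local terms indexed by the pieces of $\Delta$ applied to the arguments, arranged so that the decomposability of $\phi$ telescopes the large values of $\phi$ piece by piece, while the continuity of $\psi$ ensures that everything surviving the telescoping involves $\psi$ only through bounded expressions. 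The identity $\delta^3\tau = \delta^1\phi\smile\delta^1\psi$ then splits into a ``bulk'' cancellation forced by the telescoping structure and a ``boundary'' remainder that must be shown to be uniformly bounded.

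The main obstacle is precisely this boundary bookkeeping at the seams: when the bar differential replaces $(\dots,g_i,g_{i+1},\dots)$ by $(\dots,g_ig_{i+1},\dots)$, the decomposition $\Delta(g_ig_{i+1})$ is not literally $\Delta(g_i)$ concatenated with $\Delta(g_{i+1})$ but differs from it within a bounded window about the junction, and together with the fact that $\Delta$-decomposability and $\Delta$-continuity are only approximate identities this generates error terms that one must show assemble into a uniformly bounded cochain rather than growing with word length. Organizing the cancellation so that the $\phi$-dependence is absorbed entirely by decomposability and the $\psi$-dependence entirely by continuity, without the two interfering across the arguments of $\delta^3\tau$, is the technical heart of the argument, and it is here that the precise form of Definitions \ref{def:decomposition}, \ref{defn:decomposable quasimorphism} and \ref{def:continuous} is used.
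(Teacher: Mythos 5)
Your strategy coincides with the paper's: exhibit a bounded primitive of the cup product of the form $\phi\smile\delta^1\psi + \delta^2\sigma$, observing that $\phi\smile\delta^1\psi$ is already an (unbounded) primitive and then correcting it by a coboundary built from $\Delta$. This is exactly what Theorem~\ref{thm:technical} does, taking $\sigma = \eta + \gamma$ with $\eta(g,h)=\zeta(g,1,h)$ for
$\zeta(g,g',h)=\sum_{j}\phi(g_j)\,\omega(g_{j+1}\cdots g_k g',h)$ a piecewise rewriting of $\phi\smile\omega$ along $\Delta(g)=(g_1,\dots,g_k)$, and $\gamma(g,h)=\tfrac12\bigl(\zeta(d,d^{-1},d)+\zeta(d^{-1},1,d)\bigr)$ for $d$ the common $2$-path of $(g,h)$.

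However, your write-up stops precisely where the real work begins — you explicitly defer ``the entire task'' of constructing $\sigma$, and that construction \emph{is} the theorem. Two things in particular you do not anticipate. First, the correction comes in two structurally different pieces: $\delta^2\eta$ telescopes $\phi$ along the $\Delta$-decomposition of the first argument (using Proposition~\ref{prop: decompositions define quasimorphisms} to get the exact identity $\phi(g)\omega(h,i)=\zeta(g,h,i)-\zeta(g,1,hi)+\zeta(g,1,h)$), but this alone does not make the result bounded; what survives is, up to uniform error, a symmetric quasimorphism $\theta(c)=\zeta(c,c^{-1},c)+\zeta(c^{-1},1,c)$ of the common $3$-path $c$ of $(g,h,i)$, and a \emph{second} coboundary $\delta^2\gamma$ built from the common $2$-path and Lemma~\ref{lemma:common-piece-cocycle} is needed to cancel it. Second, verifying the claimed boundedness requires a set of approximate-additivity and approximate-continuity estimates on $\zeta$ (Proposition~\ref{prop:properties of zeta}), where the bounded $r$-part of $\Delta$-triangles controls the ``seam'' errors and $\Delta$-continuity of $\omega$ supplies a summable modulus $\sum s_j = S_{\omega,\Delta}$ that keeps the telescoped sums from growing with word length. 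Until these steps are supplied your outline is a faithful pr\'ecis of the paper's strategy, not a proof.
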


We will prove Theorem \ref{thm:main} by giving an explicit bounded coboundary in terms of $\phi$ and $\psi$ in Theorem \ref{thm:technical}.
Let $\phi$ and $\psi$ be as in Theorem \ref{thm:main}.
A key observation of this paper is that the function $(g,h,i) \mapsto \phi(g)\delta^1 \psi(h,i)$ 
 ``behaves like a honest cocycle with respect to $\Delta$''. The idea of the proof of Theorem \ref{thm:technical} is to mimic the algebraic proof that 
honest cocycles on free groups have a coboundary; see Subsection \ref{subsec:idea of technical theorem}.

It was shown by Grigorchuk \cite{grigorchuk} that Brooks quasimorphisms are \emph{dense} in the vector space of quasimorphisms in the topology of pointwise convergence. 
In light of Theorem \ref{theorem:brooks and rolli} one would like to deduce from this density that the cup product
between all bounded $2$-classes vanishes. However, this does not seem straightforward.
The topology needed for such a deduction is the stronger \emph{defect topology}.
Brooks cocycles are not dense in this topology, in fact the space of $2$-cocycles is not even separable in this topology.
We therefore ask:
\begin{quest}
Let $F$ be a non-abelian free group. Is the cup product 
$$
\smile \col \Hrm^2_b(F,\R) \times \Hrm^2_b(F,\R) \to \Hrm^4_b(F,\R)
$$
trivial?
\end{quest}
Note that it is unknown if nontrivial classes in $\Hrm^4_b(F, \R)$ exist. 
We mention that the cup product on bounded cohomology for other groups need not be trivial. Let $G$ be a group with non-trivial second bounded cohomology. Then $G \times G$ admits a non-trivial cup product $$
\smile \col \Hrm_b^2(G \times G, \R) \times \Hrm_b^2(G \times G, \R) \to \Hrm^4_b(G \times G, \R)
$$
induced by the factors. See \cite{claraloeh} for results and constructions in bounded cohomology using the cup product.

\subsection*{Organisation}
This paper is organised as follows: Section \ref{sec:preliminaries} introduces notation and recalls basic facts about bounded cohomology. 
Section \ref{sec:decomposition} defines and studies \emph{decompositions} $\Delta$ of non-abelian free groups $F$ mentioned above as well as $\Delta$-decomposable and $\Delta$-continuous quasimorphisms.
In Section \ref{sec:the technical theorem} we will introduce and prove Theorem \ref{thm:technical}, which will provide the explicit bounded primitives for the cup products studied in this paper. The key ideas of the proof are illustrated in Subsection \ref{subsec:idea of technical theorem}.
Theorems \ref{theorem:brooks and rolli} and \ref{thm:main} will be corollaries of Theorem \ref{thm:technical} and proved in Subsection \ref{subsec:theorems a and b}.

\subsection*{Acknowledgements}

I would like to thank  Roberto Frigerio, Clara L\"oh, Michelle Bucher and Marco Moraschini for helpful discussions and detailed comments and my supervisor Martin Bridson 
for his helpful comments and support.
I would further like to thank the anonymous referee for many helpful
remarks which substantially improved the paper.
The author would like to thank the Isaac Newton Institute for Mathematical Sciences, Cambridge, for support
and hospitality during the programme \textit{Non-Positive Curvature Group
Actions and Cohomology} where work on this paper was undertaken. 
This work was supported by EPSRC grant no EP/K032208/1. The author is also supported by the Oxford-Cocker Scholarship.

\newpage

\section{Preliminaries} \label{sec:preliminaries}

In Subsection \ref{subsec:Notation and conventions} we will introduce notations which will be used throughout the paper.
In Subsection \ref{subsec:bounded cohomology} we define (bounded) cohomology of groups and the cup product. In Subsection \ref{subsec:quasimorphisms} we will define quasimorphisms, in particular the quasimorphisms defined by Brooks and Rolli.

\subsection{Notation and conventions} \label{subsec:Notation and conventions}

A generic group will be denoted by $G$ and a non-abelian free group will be denoted by $F$.
The generating set $\mathcal{S}$ of $F$ will consist of letters in code-font (''$\att,\btt$``). The identity element of a group will be denoted by ''$1$``. Small Roman letters (''$a,b$``) typically denote elements of groups. Curly 
capitals (''$\mathcal{A}, \mathcal{B}$``) denote sets, typically subsets of $F$. Functions (typically from $F^k$ to $\R$) will be denoted by Greek letters (''$\alpha, \beta$``).
We stick to this notation unless it is mathematical convention to do otherwise.

\subsection{(Bounded) cohomology and the cup product} \label{subsec:bounded cohomology}
We recall the definition of discrete (bounded) cohomology with trivial real coefficients using the inhomogeneous resolution.
Let $G$ be a group, let $C^k(G, \R) = \{ \phi \col G^k \to \R \}$ and let $C^k_b(G, \R) \subset C^k(G, \R)$ be the subset of bounded functions with respect to the supremum norm.  We define the coboundary operator $\delta^n \col C^n(G,\R) \to C^{n+1}(G,\R)$ via
\begin{equ*}{rcl}
 \delta^n(\alpha)(g_1, \ldots, g_{n+1}) &=& \alpha(g_2, \ldots, g_{n+1}) \\
 & &+ \sum_{j=1}^n (-1)^j \alpha(g_1, \ldots, g_j g_{j+1}, \ldots, g_{n+1}) \\
 & &+(-1)^{n+1} \alpha(g_1, \ldots, g_n)
 \end{equ*}
 and note that it restricts to $\delta^n \col C^n_b(G, \R) \to C^{n+1}_b(G,\R)$.
The homology of the cochain complex $(C^*(G,\R), \delta^*)$ is called the \emph{cohomology of the group $G$ with trivial real coefficients}
and denoted by $\Hrm^*(G,\R)$.
Similarly the homology of $(C^*_b(G,\R), \delta^*)$  is called the the \emph{bounded cohomology of $G$ with trivial real coefficients} and 
denote it by $\Hrm^*_b(G,\R)$.
The inclusion $C^n_b(G, \R) \inj C^n(G, \R)$ is a chain map which induces a map $c^n \col \Hrm^n_b(G,\R) \to \Hrm^n(G,\R)$ called the \emph{comparison map}. Cocycles in the kernel 
of $c^n$ are called \emph{exact} classes and will correspond to quasimorphisms if $n=2$; see Subsection \ref{subsec:quasimorphisms}.
For a detailed discussion see \cite{monod_invitation} for a survey and \cite{frigerio} for a book on bounded cohomology.

The \emph{cup product} is a map $\smile \col \Hrm^n(G,\R) \times \Hrm^m(G, \R) \to \Hrm^{n+m}(G, \R)$ defined by setting $([\omega_1], [\omega_2]) \mapsto [\omega_1] \smile [\omega_2]$ where
$[\omega_1] \smile [\omega_2] \in \Hrm^{n+m}(G,\R)$ is represented by the cocycle $\omega_1 \smile \omega_2 \in C^{n+m}(G,\R)$ defined via
\[
\omega_1 \smile \omega_2 \col (g_1, \ldots, g_n, g_{n+1}, \ldots, g_{n+m}) \mapsto \omega_1(g_1, \ldots, g_n) \cdot \omega_2(g_{n+1}, \ldots, g_{n+m}).
\]
It is easy to check that this map induces a well-defined map 
$$
\smile \col \Hrm^n_b(G,\R) \times \Hrm^m_b(G, \R) \to \Hrm^{n+m}_b(G, \R).
$$

\subsection{Quasimorphisms} \label{subsec:quasimorphisms}
 
A \emph{quasimorphism} is a map $\alpha \col G \to \R$ such that there is a constant $D>0$ such that for every $g,h \in G$, $|\alpha(g) - \alpha(gh) + \alpha(h)| < D$ and hence $\delta^1 \alpha \in C^2_b(G, \R)$.
Hence the exact $2$-classes of $\Hrm^2_b(G, \R)$ are exactly the coboundaries of 
quasimorphisms.
A quasimorphism $\alpha \col G \to \R$ will be called \emph{symmetric} if $\alpha$ satisfies in addition that
$\alpha(g) = -\alpha(g^{-1})$ for all $g \in G$. In this case, we call its coboundary $\delta^1 \alpha \in C^2_b(G, \R)$
symmetric as well. It is easy to see that each exact $2$-class is represented by a symmetric cocycle.
On a non-abelian free group $F$ there are several constructions of non-trivial quasimorphisms.

\begin{exmp} \label{exmp:brooks quasimorphism}
In \cite{Brooks}, Brooks gave the first example of an infinite family of linearly independent quasimorphisms on the free group. Let $F$ be a non-abelian free group on a fixed generating set $\mathcal{S}$. Let $w, g \in F$ be two elements which are represented by reduced words $w = \ytt_1 \cdots \ytt_n$ and $g = \xtt_1 \cdots \xtt_m$, where $\xtt_j, \ytt_j$ are letters of $F$.
We say that $w$ is a sub-word of $g$ if $n \leq m$ and there is an $s \in \{0, \ldots, m-n \}$ such that $\ytt_j = \xtt_{j+s}$ for all $j = 1, \ldots, n$.
 Let $w$ be a reduced \emph{non self-overlapping} word, i.e.\ a word $w$ such that there are no words $x$ and $y$ with $x$ non-trivial such that $w = x y x$ as a reduced word.
For $w$ a non self-overlapping word we define the function $\nu_w \col F \to \Z$ by setting $\nu_w \col g  \mapsto  \# \{ \mbox{$w$ is a subword of $g$} \}$. Then the \emph{Brooks counting quasimorphism on the word $w$} is the function 
$$
\phi_w = \nu_w - \nu_{w^{-1}}.
$$ It is easy to see that this defines a symmetric quasimorphism.
\end{exmp}

\begin{exmp} \label{exmp:rolli quasimorphism}
In \cite{rolli}, Rolli gave a different example of an infinite family of linearly independent quasimorphisms.
Suppose $F$ is generated by $\mathcal{S} = \{ \xtt_1, \ldots, \xtt_n \}$.
Let $\lambda_1, \ldots, \lambda_n \in \ell^\infty_{alt}(\Z)$ be bounded functions $\lambda_j \col \Z \to \R$ that satisfy $\lambda_j(-n) = -\lambda_j(n)$.
Each non-trivial element $g \in F$ may be uniquely written as $g = \xtt_{n_1}^{m_1} \cdots \xtt_{n_k}^{m_k}$ where all $m_j$ are non-zero and no consecutive $n_j$ are the same. 
Then we can see that the map $\phi \col F \to \R$ defined by setting
$$
\phi \col g \mapsto \sum_{j=1}^k \lambda_{n_j}(m_j)
$$
is a symmetric quasimorphism called \emph{Rolli quasimorphism}.
\end{exmp}

We will generalise Brooks and Rolli quasimorphisms in the next section.

\section{Decomposition} \label{sec:decomposition}

The aim of this section is to introduce \emph{decompositions of $F$} in Subsection \ref{subsec:decompositions of F}.
Let $F$ be a non-abelian free group with a fixed set of generators.
Crudely, a decomposition $\Delta$ is a way of assigning a finite sequence $(g_1, \ldots g_k)$ of elements $g_j \in F$ to an element $g \in F$ such that $g= g_1 \cdots g_k$ as a reduced word and such that that this decomposition behaves well on geodesic triangles in the Cayley graph.
We will see that any decomposition $\Delta$ induces a quasimorphism (Proposition \ref{prop: decompositions define quasimorphisms}), called $\Delta$-decomposable quasimorphism in Subsection \ref{subsec:decomposable quasimorphisms}.
We will introduce special decompositions, $\Delta_{triv}$, $\Delta_w$ and $\Delta_{rolli}$ and see that
$\Delta_{triv}$-decomposable quasimorphisms are exactly the homomorphisms $F \to \R$, that Brooks quasimorphisms on a non self-overlapping word $w$ are $\Delta_w$-decomposable and that the quasimorphisms in the sense of Rolli are $\Delta_{rolli}$-decomposable.
In Subsection \ref{subsec:continuous quasimorphisms} we introduce $\Delta$-continuous cocycles.

\subsection{Notation for sequences} \label{subsec:notation for sequences}
A set $\mathcal{A} \subset F$ will be called \emph{symmetric} if $a \in \mathcal{A}$ implies that $a^{-1} \in \mathcal{A}$. For such a symmetric 
set $\mathcal{A} \subset F$, we denote by $\mathcal{A}^*$ the set of finite sequences in $\mathcal{A}$ including the empty sequence. This is, the set 
of all expressions $(a_1, \ldots, a_k)$ where $k \in \N^0$ is arbitrary and $a_j \in \mathcal{A}$. We will denote the element $(a_1, \ldots, a_k) \in \mathcal{A}^*$  
by $(\underline{a})$ and $k$ will be  called the \emph{length} of $(\underline{a})$ where we set $k=0$ if $(\underline{a})$ is the empty sequence.  
For a sequence $(\underline{a})$, we denote by $(\underline{a}^{-1})$ the sequence $(a_k^{-1}, \ldots, a_1^{-1}) \in \mathcal{A}^*$ and 
the element $\bar{a} \in F$ denotes the product $a_1 \cdots a_k \in F$.
We will often work with multi-indexes: The sequences $(\underline{a}_1), (\underline{a}_2), (\underline{a}_3) \in \mathcal{A}^*$ will correspond to the sequences $(\underline{a}_j) = (a_{j,1}, \ldots, a_{j,n_j})$, 
where $n_j$ is the length of $(\underline{a}_j)$ for $j=1,2,3$.
For two sequences $(\underline{a}) = (a_1, \ldots, a_k)$ and $(\underline{b}) = (b_1, \ldots, b_l)$ we define the \emph{common sequence of $(\underline{a})$ and $(\underline{b})$} to be the empty sequence if $a_1 \not = b_1$ and
to be the sequence $(\underline{c}) = (a_1, \ldots, a_n)$ where $n$ is the largest integer with $n \leq \min \{k,l \}$ such that 
$a_j=b_j$ for all $j \leq n$. Moreover, $(\underline{a}) \cdot (\underline{b})$ will denote the sequence $(a_1, \ldots, a_k, b_1, \ldots, b_l)$.

\subsection{Decompositions of $F$} \label{subsec:decompositions of F}
We now define the main tool of this paper, namely \emph{decompositions}. As mentioned in the introduction we will restrict our attention to non-abelian free groups $F$ on a fixed generating set $\mathcal{S}$. 
\begin{defn} \label{def:decomposition}
Let $\mathcal{P} \subset F$ be a symmetric set of elements of $F$ called \emph{pieces} and assume that $\mathcal{P}$ does not contain the identity.
A \emph{decomposition of $F$ into the pieces $\mathcal{P}$} is a map $\Delta \col F \to \mathcal{P}^*$ assigning to every element $g \in F$ a finite sequence
$\Delta(g) = (g_1, \ldots, g_k)$ with $g_j \in \mathcal{P}$ such that:
\begin{enumerate}
\item For every $g \in F$ and $\Delta(g) = (g_1, \ldots, g_k)$ we have $g = g_1 \cdots g_k$ as a reduced word (no cancelation). Also, we require that  $\Delta(g^{-1}) = (g_k^{-1}, \ldots, g_1^{-1})$.
\item \label{property:suffix_closed} For every $g \in F$ with $\Delta(g) = (g_1, \ldots, g_k)$ we have $\Delta(g_i \cdots g_j) = (g_i, \ldots, g_j)$ 
for $1 \leq i \leq j \leq k$. We refer to this property as $\Delta$ being \emph{infix closed}.
\item 
There is a constant $R > 0$ with the following property.

Let $g,h \in F$ and let 
\begin{itemize}
\item $(\underline{c}_1) \in \mathcal{P}^* $ be such that $(\underline{c}_1^{-1})$ is the common sequence of $\Delta(g)$ and $\Delta(gh)$, 
\item $(\underline{c}_2) \in \mathcal{P}^* $ be such that $(\underline{c}_2^{-1})$ is the common sequence of $\Delta(g^{-1})$ and $\Delta(h)$ and 
\item $(\underline{c}_3) \in \mathcal{P}^* $ be such that $(\underline{c}_3^{-1})$ is the common sequence of $\Delta(h^{-1})$ and $\Delta(h^{-1} g^{-1})$. 
\end{itemize}
It is not difficult to see that there are $(\underline{r}_1), (\underline{r}_2), (\underline{r}_3) \in \mathcal{P}^*$ such that
\begin{align*}
\Delta(g) &= ( \underline{c}_1^{-1}) \cdot (\underline{r}_1) \cdot (\underline{c}_2) \\
\Delta(h) &= ( \underline{c}_2^{-1} ) \cdot (\underline{r}_2) \cdot (\underline{c}_3) \mbox{ and }\\
\Delta(h^{-1} g^{-1}) &= ( \underline{c}_3^{-1}) \cdot (\underline{r}_3) \cdot  (\underline{c}_1).
\end{align*}
Then the length of $(\underline{r}_1)$, $(\underline{r}_2)$ and $(\underline{r}_3)$ is bounded by $R$.
See Figure \ref{fig:Figure1} for a geometric interpretation and Subsection \ref{subsec:notation for sequences} for the notation of common sequences and concatenation of sequences.
\end{enumerate}
For such a pair $(g,h)$ we will call $(\underline{c}_1), (\underline{c}_2),(\underline{c}_3)$ the \emph{ $c$-part of the $\Delta$-triangle of $(g,h)$} and 
$(\underline{r}_1), (\underline{r}_2),(\underline{r}_3)$ the  \emph{$r$-part of the $\Delta$-triangle of $(g,h)$}.
A sequence $(g_1, \ldots, g_k)$ such that 
$$
\Delta(g_1 \cdots g_k) = (g_1, \ldots, g_k)
$$
will be called a \emph{proper $\Delta$ sequence}. 
\end{defn}

 \begin{figure}
  \centering

  \subfloat{\includegraphics[width=0.3\textwidth]{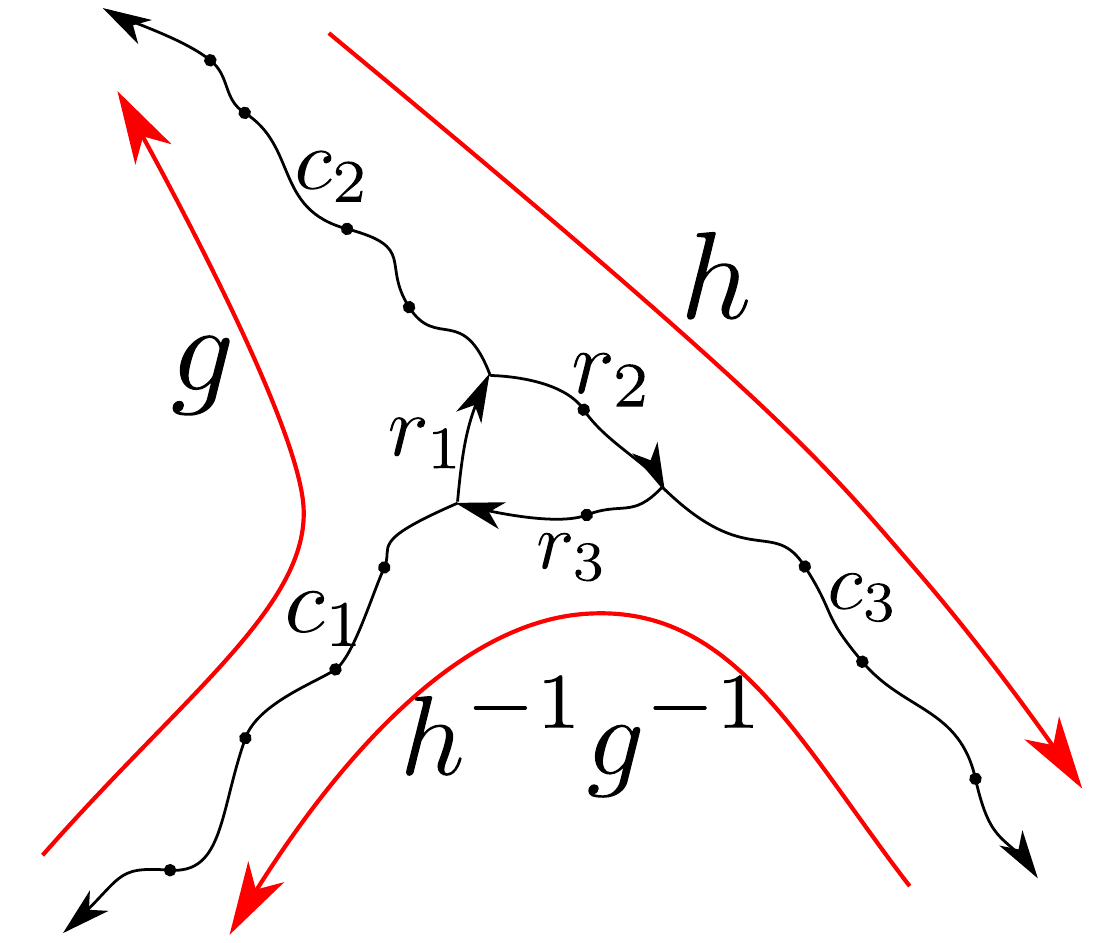}}

  \caption{$\Delta(g)$, $\Delta(h)$ and $\Delta(h^{-1} g^{-1})$ have sides which can be identified.} \label{fig:Figure1}
\end{figure}

\begin{exmp} \label{exmp:trivial decomposition}
Let $\mathcal{S} = \{ \xtt_1, \ldots, \xtt_n \}$ be an alphabet generating $F$. Every word $w \in F$ may be uniquely written as a  word
$w = \ytt_1 \cdots \ytt_k$
without backtracking where $\ytt_i \in \mathcal{S}^\pm$.
Set $\mathcal{P}_{triv} = \mathcal{S}^\pm$ and define the map
$\Delta_{triv} \col F \to \mathcal{P}_{triv}^*$ by setting
$$
\Delta_{triv} \col w \mapsto (\ytt_1, \ldots, \ytt_k)
$$
for $w$ as above.
Then we see that $\Delta_{triv}$ is indeed a decomposition.
Let $g,h \in G$ and let $c_1, c_2, c_3$ be such that
$g = c_1^{-1} c_2$, $h = c_2^{-1} c_3$ and $gh = c_1^{-1} c_3$ as reduced words.
Then the $c$-part of the $\Delta_{triv}$-triangle of $(g,h)$ is
$\Delta_{triv}(c_1), \Delta_{triv}(c_2), \Delta_{triv}(c_3)$ and the $r$-part of the $\Delta_{triv}$-triangle of $(g,h)$ is $(\emptyset), (\emptyset), (\emptyset)$ where $(\emptyset)$ denotes the empty sequence.

We call the map $\Delta_{triv}$ the \emph{trivial decomposition}.
\end{exmp}

\begin{exmp} \label{exmp:brooks decomposition}
Let $w \in F$ be a non self-overlapping word (see Example \ref{exmp:brooks quasimorphism}). Every word $g \in F$ may be written as $g = u_1 w^{\epsilon_1} u_2 \cdots u_{k-1} w^{\epsilon_{k-1}} u_k$, where the $u_j$'s may be empty, $\epsilon_j \in \{-1, +1 \}$ and no $u_j$ contains $w$ or $w^{-1}$ as subwords. It is not hard to show that this expression is unique. Observe that a reduced word in the free group does not overlap with its inverse.
Set $\mathcal{P}_w = \{ u \in F \mid \mbox{neither $w$ nor $w^{-1}$ are subwords of $u$} \} \cup \{ w, w^{-1} \}$.

We define the \emph{Brooks-decomposition on the word $w$} as the map $\Delta_w \col F \to \mathcal{P}_w^*$ by setting
$$
\Delta_w \col g \to (u_1, w^{\epsilon_1}, u_2, \cdots, u_{k-1}, w^{\epsilon_{k-1}},u_k)
$$
for $g$ as above.
It is easy to check that this is indeed a decomposition.
\end{exmp}

\begin{exmp} \label{exmp:rolli decomposition}
As in Example \ref{exmp:rolli quasimorphism}, suppose that $F$ is generated by $\mathcal{S} = \{ \xtt_1, \ldots, \xtt_n \}$ and observe that every non-trivial element $g \in F$ may be uniquely written as $g = \xtt_{n_1}^{m_1} \cdots \xtt_{n_k}^{m_k}$ where all $m_j$ are non-zero and no consecutive $n_j$ are the same. 
Set $\mathcal{P}_{rolli} = \{ \xtt_j^m \mid j \in \{ 1, \ldots, n \}, m \in \Z \}$.
We define the \emph{Rolli-decompostion} as the map $\Delta_{rolli} \col F \to \mathcal{P}_{rolli}^*$ via
$$
\Delta_{rolli} \col g \mapsto (\xtt_{n_1}^{m_1}, \ldots , \xtt_{n_k}^{m_k})
$$
for $g$ as above. It is easy to check that this is indeed a decomposition.
\end{exmp}

Often we just talk about the decomposition without specifying the pieces $\mathcal{P}$ explicitly.
From a decomposition $\Delta$ we derive the notion of two sorts of quasimorphisms: \emph{$\Delta$-decomposable quasimorphisms} (Definition \ref{defn:decomposable quasimorphism}) and \emph{$\Delta$-continuous quasimorphisms} (Definition \ref{def:continuous}).

\subsection{$\Delta$-decomposable quasimorphisms} \label{subsec:decomposable quasimorphisms}
Each decomposition $\Delta$ of $F$ induces many different quasimorphisms on $F$.
\begin{defn} \label{defn:decomposable quasimorphism}
Let $\Delta$ be a decomposition with pieces $\mathcal{P}$ and let $\lambda \in \ell^\infty_{alt}(\mathcal{P})$ be a symmetric bounded map on $\mathcal{P}$, i.e. $\lambda(p^{-1}) = - \lambda(p)$ for every $p \in \mathcal{P}$.
Then the map $\phi_{\lambda, \Delta} \col F \to \R$ defined via
$$
\phi_{\lambda, \Delta} \col g \mapsto \sum_{j=1}^k \lambda(g_j)
$$
where $\Delta(g) = (g_1, \ldots, g_k)$
is called a \emph{$\Delta$-decomposable} quasimorphism.
\end{defn}

We may check that such a $\phi_{\lambda, \Delta}$ is indeed a quasimorphism.
\begin{prop} \label{prop: decompositions define quasimorphisms}
Let $\Delta$ and $\lambda$ be as in Definition \ref{defn:decomposable quasimorphism}. Then $\phi_{\lambda, \Delta}$ is a symmetric quasimorphism.
If $g, g' \in F$ are such that $\Delta(g \cdot g') = (\Delta(g)) \cdot (\Delta(g'))$ then $\delta^1 \phi(g, g') = 0$.
In particular, for all $g \in G$ with $\Delta(g) = (g_1, \ldots, g_k)$ we have that $\delta^1 \phi_{\lambda, \Delta}(g_j, g_{j+1} \cdots g_k) = 0$ for $j = 1, \ldots, k-1$.
\end{prop}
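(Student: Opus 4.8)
The plan is to verify the three assertions in order, each following quickly from the previous one together with the defining properties of a decomposition.

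\textbf{Step 1: the key vanishing identity.} First I would prove the middle statement: if $\Delta(g g') = (\Delta(g)) \cdot (\Delta(g'))$, then $\delta^1 \phi_{\lambda, \Delta}(g, g') = 0$. Write $\Delta(g) = (g_1, \ldots, g_k)$ and $\Delta(g') = (g'_1, \ldots, g'_l)$. By hypothesis $\Delta(g g') = (g_1, \ldots, g_k, g'_1, \ldots, g'_l)$, so by definition of $\phi_{\lambda, \Delta}$ we get $\phi_{\lambda,\Delta}(gg') = \sum_{i=1}^k \lambda(g_i) + \sum_{j=1}^l \lambda(g'_j) = \phi_{\lambda,\Delta}(g) + \phi_{\lambda,\Delta}(g')$. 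Since $\delta^1 \phi(g,g') = \phi(g) - \phi(gg') + \phi(g')$, this vanishes. This step is essentially a one-line computation.

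\textbf{Step 2: symmetry.} For symmetry, note that property (1) of a decomposition gives $\Delta(g^{-1}) = (g_k^{-1}, \ldots, g_1^{-1})$ whenever $\Delta(g) = (g_1, \ldots, g_k)$. Hence $\phi_{\lambda,\Delta}(g^{-1}) = \sum_{j=1}^k \lambda(g_j^{-1}) = -\sum_{j=1}^k \lambda(g_j) = -\phi_{\lambda,\Delta}(g)$, using that $\lambda$ is symmetric on $\mathcal{P}$. So $\phi_{\lambda,\Delta}$ is symmetric.

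\textbf{Step 3: the quasimorphism bound.} This is the main point. Given arbitrary $g, h \in F$, I would invoke property (3) (and the infix-closedness, property (2)) to write $\Delta(g) = (\underline{c}_1^{-1}) \cdot (\underline{r}_1) \cdot (\underline{c}_2)$, $\Delta(h) = (\underline{c}_2^{-1}) \cdot (\underline{r}_2) \cdot (\underline{c}_3)$, and $\Delta(h^{-1}g^{-1}) = (\underline{c}_3^{-1}) \cdot (\underline{r}_3) \cdot (\underline{c}_1)$, with the $r$-parts of length at most $R$. Applying $\phi_{\lambda,\Delta}$ and using that $\phi_{\lambda,\Delta}$ sums $\lambda$ over the entries of $\Delta(\cdot)$, the contributions of the $c$-parts cancel in the alternating sum $\phi_{\lambda,\Delta}(g) - \phi_{\lambda,\Delta}((gh)^{-1}\cdot \text{something})$: more precisely, using symmetry (Step 2), $\phi_{\lambda,\Delta}(gh) = -\phi_{\lambda,\Delta}(h^{-1}g^{-1})$, so $\delta^1\phi_{\lambda,\Delta}(g,h) = \phi_{\lambda,\Delta}(g) + \phi_{\lambda,\Delta}(h) + \phi_{\lambda,\Delta}(h^{-1}g^{-1})$; substituting the three decompositions, each $\lambda(\underline{c}_i^{\pm 1})$-block appears once with each sign and cancels, leaving only the sum of $\lambda$ over the $r$-parts, which has at most $3R$ terms. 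Since $\lambda$ is bounded, $|\delta^1\phi_{\lambda,\Delta}(g,h)| \le 3R\|\lambda\|_\infty$, so $\phi_{\lambda,\Delta}$ is a quasimorphism with defect at most $3R\|\lambda\|_\infty$. The one subtlety to be careful about is bookkeeping the signs and orientations of the $c$-parts (which appear as $(\underline{c}_i)$ in one triangle side and as $(\underline{c}_i^{-1})$ in the adjacent side), so that the cancellation is exact; this is where I would slow down and check indices against Figure \ref{fig:Figure1}.

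\textbf{Step 4: the final ``in particular''.} For the last assertion, fix $g$ with $\Delta(g) = (g_1, \ldots, g_k)$ and fix $j \le k-1$. By infix-closedness (property (2)), $\Delta(g_j) = (g_j)$ and $\Delta(g_{j+1}\cdots g_k) = (g_{j+1}, \ldots, g_k)$, and also $\Delta(g_j \cdot (g_{j+1}\cdots g_k)) = \Delta(g_j \cdots g_k) = (g_j, g_{j+1}, \ldots, g_k) = (\Delta(g_j)) \cdot (\Delta(g_{j+1}\cdots g_k))$. So the hypothesis of Step 1 is satisfied with $g \rightsquigarrow g_j$ and $g' \rightsquigarrow g_{j+1}\cdots g_k$, and therefore $\delta^1\phi_{\lambda,\Delta}(g_j, g_{j+1}\cdots g_k) = 0$. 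I expect the genuine obstacle to be only Step 3's sign-chasing; everything else is immediate from the definitions.
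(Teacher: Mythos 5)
Your proof is correct and follows essentially the same route as the paper: symmetry is immediate from the definitions, the quasimorphism bound $3R\|\lambda\|_\infty$ comes from expanding $\phi_{\lambda,\Delta}$ along the $c$- and $r$-parts of the $\Delta$-triangle for $(g,h)$ and observing that the $c$-contributions cancel (your reformulation via $\phi_{\lambda,\Delta}(gh) = -\phi_{\lambda,\Delta}(h^{-1}g^{-1})$ is just a cosmetic variant of the paper's direct computation with $\phi_{\lambda,\Delta}(gh)$), and the remaining statements follow from additivity and infix-closedness exactly as in your Steps 1 and 4.
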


\begin{proof}
Symmetry is immediate from the assumptions on $\Delta(g^{-1})$ and $\lambda$. 
Let $g,h \in F$ and let $(\underline{c}_j), (\underline{r}_j)$, $j \in \{ 1,2,3 \}$ be as in the definition of the decomposition.
We compute
\begin{align*}
\phi_{\lambda, \Delta}(g) = - \sum_{j=1}^{n_1} \lambda(c_{1,j}) + \sum_{j=1}^{m_1} \lambda(r_{1,j}) + \sum_{j=1}^{n_2} \lambda(c_{2,j}) \\
\phi_{\lambda, \Delta}(h) = - \sum_{j=1}^{n_2} \lambda(c_{2,j}) + \sum_{j=1}^{m_2} \lambda(r_{2,j}) + \sum_{j=1}^{n_3} \lambda(c_{3,j}) \\
\phi_{\lambda, \Delta}(gh) = - \sum_{j=1}^{n_1} \lambda(c_{1,j}) - \sum_{j=1}^{m_3} \lambda(r_{3,j}) + \sum_{j=1}^{n_3} \lambda(c_{3,j})
\end{align*}
and hence
\[
\delta^1 \phi_{\lambda, \Delta}(g,h) = \phi_{\lambda, \Delta}(g) + \phi_{\lambda, \Delta}(h) - \phi_{\lambda, \Delta}(gh) =  \sum_{j=1}^{m_1} \lambda(r_{1,j}) + \sum_{j=1}^{m_2} \lambda(r_{2,j}) + \sum_{j=1}^{m_3} \lambda(r_{3,j})
\]
and hence $| \delta^1 \phi_{\lambda, \Delta}(g,h)| \leq 3 R \| \lambda \|_\infty$.
Note that from this calculation we also see that $\delta^1 \phi_{\lambda, \Delta}(g,h)$ only depends on the $r$-part of the $\Delta$-triangle for $(g,h)$ and not on the $c$-part. 
The second part follows immediately from property (\ref{property:suffix_closed}) of a decomposition.
\end{proof}

Both Brooks and Rolli quasimorphisms are $\Delta$-decomposable quasimorphisms with respect to some $\Delta$ as the following examples show: 

\begin{exmp}
Let $\Delta_{triv}$ be the trivial decomposition of
Example \ref{exmp:trivial decomposition}.
It is easy to see that the $\Delta_{triv}$-decomposable quasimorphisms are exactly the homomorphisms $\phi \col F \to \R$.
\end{exmp}

\begin{exmp}
Let $\mathcal{P}_w$ be as in Example \ref{exmp:brooks decomposition} and define $\lambda \col \mathcal{P}_w \to \R$ by setting
$$
\lambda \col p \mapsto
\begin{cases}
1 & \mbox{if } p = w,\\
-1 & \mbox{if } p = w^{-1},\\
0 & \mbox{otherwise}.
\end{cases}
$$ 
Then it we see that the induced decomposable quasimorphism $\phi_{\lambda, \Delta_w}$ is the Brooks counting quasimorphism on $w$; see Example \ref{exmp:brooks quasimorphism}.
\end{exmp}

\begin{exmp}
Let $\lambda_1, \ldots, \lambda_n$ be as in Example \ref{exmp:rolli quasimorphism} and let $\mathcal{P}_{rolli}$ be as in Example \ref{exmp:rolli decomposition}. Define $\lambda \col \mathcal{P}_{rolli} \mapsto \R$ by setting
$$
\lambda \col x_j^m \mapsto \lambda_j(m).
$$
Then we see that the induced quasimorphism $\phi_{\lambda, \Delta_{rolli}}$ is a Rolli quasimorphism; see Example \ref{exmp:rolli quasimorphism}.
\end{exmp}

\subsection{$\Delta$-continuous quasimorphisms and cocycles} \label{subsec:continuous quasimorphisms}

We will define $\Delta$-continuous cocycles.
Crudely, a cocycle $\omega$ is $\Delta$-continuous, if the value $\omega(g,h)$ depends ``mostly'' on the neighbourhood of the midpoint of the geodesic triangle spanned by $e, g, gh$ in the Cayley graph of $F$.
For this, we will first establish a notion of when two pairs $(g,h)$ and $(g',h')$ of elements in $F$ define triangles which are ``close''.

For this we define the function $N_{\Delta} \col F^2 \times F^2 \to \N \cup \infty$ as follows.
Let $(g,h) \in F^2$ and $(g', h') \in F^2$ be two pairs of elements of $F$.
Let $(\underline{c}_j), (\underline{r}_j)$
for $j=1,2,3$ be the $\Delta$-triangle of $(g, h)$
where $(\underline{c}_j)$ has length $n_j$
 and let $(\underline{c}'_j), (\underline{r}'_j)$
for $j=1,2,3$ be the $\Delta$-triangle of $(g', h')$ where $(\underline{c}'_j)$ has length $n'_j$.

We set $N_\Delta((g,h),(g',h')) = 0$ if there is a $j \in \{1,2,3 \}$ such that $r_j \not = r'_j$ and
$N_\Delta((g,h),(g',h'))=\infty$ if $(g,h) = (g',h')$.
Else, let $N_\Delta((g,h),(g',h'))$ be the largest integer $N$ which satisfies that $N \leq \min \{n_j, n_j' \}$ and $c_{j,k}=c'_{j,k}$ for every $k \leq N$ and $j \in \{1,2,3 \}$ such that $c_j \not = c'_j$.

Observe that $N_\Delta((g,h), (g',h')) = \infty$ if and only if $(g,h) = (g',h')$.
This is because if $(g,h) \not = (g',h')$ then either there is some $j$ such that $r_j \not = r'_j$, in which case $N_\Delta((g,h), (g',h')) = 0$ or there is some $j$ such that $c_j \not = c'_j$ in which case $N_\Delta((g,h), (g',h')) \leq \min \{ n_j, n'_j \}$.
Crudely, $N_{\Delta}$ measures how much the triangle corresponding to $(g,h)$ agrees with the triangle corresponding to $(g',h')$ arround the ``centre'' of the triangle; see Figure \ref{fig:Figure2}. 
To illustrate $N_\Delta$ we will give an example for $\Delta$ the trivial decomposition.
\begin{exmp}
Let $\Delta$ be the trivial decomposition and let $g,h,i \in F$ be such that $ghi$ has no cancellation and assume that $g$ is not-trivial.
Then we claim that $N_\Delta((gh,i), (h,i)) = |h|$, where $|h|$ is the word-length of $h$.
To see this observe that the $r$-part of the $\Delta$ triangles of $(gh,i)$ and $(h,i)$ agrees (it's both $(\emptyset, \emptyset, \emptyset)$.
Moreover, the $c$-part of the $\Delta$-triangles $(gh,i)$ and $(h,i)$ is
$$
(\Delta(h)^{-1} \cdot \Delta(g)^{-1}, \emptyset, \Delta(i)) = (\underline{c}_1, \underline{c}_2, \underline{c}_3)
$$
and
$$
(\Delta(h)^{-1}, \emptyset, \Delta(i)) = (\underline{c}'_1, \underline{c}'_2, \underline{c}'_3).
$$
We see that $\underline{c}_2 = \underline{c}_2'$ and $\underline{c}_3 = \underline{c}_3'$ but 
$\underline{c}_1 \not = \underline{c}_1'$. Observe that the length of $\underline{c}_1$ is $|h|+|g|$ and the length of $\underline{c}'_1$ is $|h|$. Moreover, $c_{1,k} = c'_{1,k}$ for every $k \leq |h|$.
This shows that indeed $N_\Delta((gh,i), (h,i)) = |h|$.
\end{exmp}

 \begin{figure}
  \centering

  \subfloat{\includegraphics[width=0.3\textwidth]{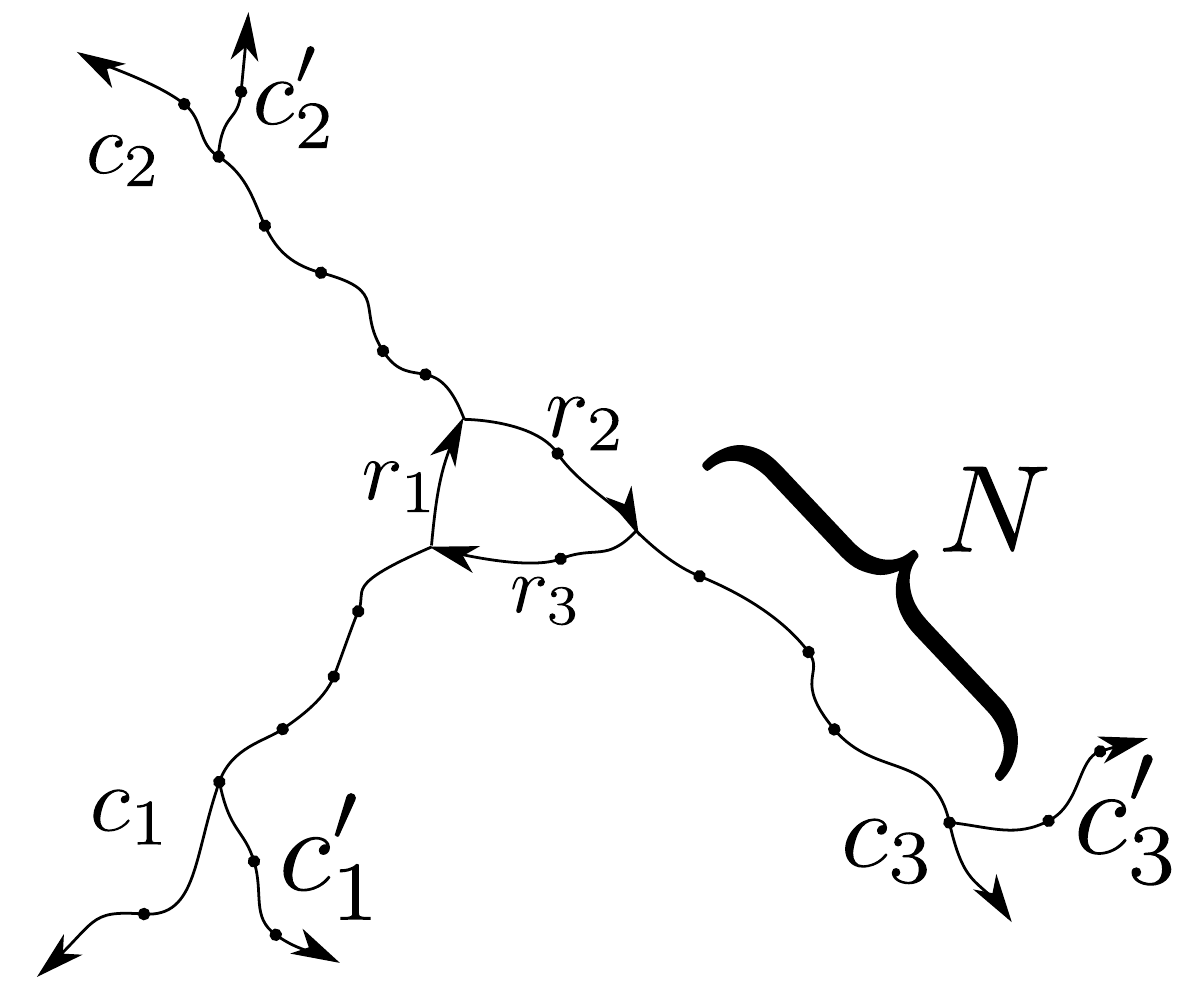}}

  \caption{The $\Delta$-triangle for $(g,h)$ vs. the $\Delta$-triangle for $(g',h')$ and $N = N_{\Delta}((g,h), (g',h'))$} \label{fig:Figure2}
\end{figure}

\begin{defn} \label{def:continuous}
Let $\Delta$ be a decomposition of $F$ and let $N_\Delta$ be as above.
A quasimorphism $\phi$ is called $\Delta$-continuous if $\phi$ is symmetric (i.e.\ $\phi(g^{-1}) = -\phi(g)$ for every $g \in G$) and $\omega = \delta^1 \phi$
satisfies that there is a constant $S_{\omega, \Delta} > 0$ and a non-negative summable sequence $(s_j)_{j \in \N}$ with $\sum_{j=0}^\infty s_j = S_{\omega, \Delta}$ such that for all 
$(g,h), (g',h') \in F^2$ we have that either $(g,h) = (g',h')$ or, 
$$
|\omega(g,h)-\omega(g',h')| \leq s_{N}
$$
where $N = N_\Delta((g,h),(g',h'))$.
In this case we call $\omega$ $\Delta$-continuous as well.
\end{defn}

Crudely, a cocycle $\omega$ is $\Delta$-continuous if its values depend mostly on the parts of the decomposition which lies 
close to the centre of the triangle $g,h,h^{-1} g^{-1}$.

Many quasimorphisms are $\Delta$-continuous as the following proposition shows.
\begin{prop} \label{prop: continuous quasimorphsms}
Let $\Delta$ be a decomposition of $F$. 
\begin{enumerate}
\item  \label{item: decomposable is continuous} Every $\Delta$-decomposable quasimorphism is $\Delta$-continuous.
\item \label{item:brooks is continuous} Every Brooks quasimorphism $\phi \col F \to \R$ is $\Delta$-continuous.
\end{enumerate}
\end{prop}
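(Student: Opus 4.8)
The plan is to prove both items by producing, for each quasimorphism $\phi$ in question, an explicit summable sequence $(s_j)$ and verifying the estimate $|\omega(g,h) - \omega(g',h')| \le s_N$ with $N = N_\Delta((g,h),(g',h'))$, where $\omega = \delta^1\phi$. The crucial structural fact to exploit is already recorded in Proposition \ref{prop: decompositions define quasimorphisms}: the value $\delta^1\phi_{\lambda,\Delta}(g,h)$ depends only on the $r$-part of the $\Delta$-triangle of $(g,h)$, namely $\delta^1\phi_{\lambda,\Delta}(g,h) = \sum_{j} \lambda(r_{1,j}) + \sum_j \lambda(r_{2,j}) + \sum_j \lambda(r_{3,j})$. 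So for item (\ref{item: decomposable is continuous}), whenever $N_\Delta((g,h),(g',h')) \ge 1$ the two $r$-parts coincide (by the very definition of $N_\Delta$, which is set to $0$ as soon as some $r_j \ne r'_j$), hence $\omega(g,h) = \omega(g',h')$ and the difference vanishes. When $N = 0$ we only need the crude bound $|\omega(g,h) - \omega(g',h')| \le 2\cdot 3R\|\lambda\|_\infty = 6R\|\lambda\|_\infty$ from the defect estimate in Proposition \ref{prop: decompositions define quasimorphisms}. Thus the sequence $s_0 = 6R\|\lambda\|_\infty$ and $s_j = 0$ for $j \ge 1$ works; it is trivially summable. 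So item (\ref{item: decomposable is continuous}) is essentially immediate from what has already been proved.

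For item (\ref{item:brooks is continuous}), let $\phi = \phi_w$ be a Brooks quasimorphism on a reduced word $w$ (here one should allow $w$ self-overlapping as well, since the statement says "every Brooks quasimorphism"; if the intended reading is the non self-overlapping case it also falls under item (\ref{item: decomposable is continuous}) via $\Delta_w$, so the content here is the general case with respect to an \emph{arbitrary} decomposition $\Delta$). The idea is that $\omega(g,h) = \delta^1\phi_w(g,h)$ counts (signed) occurrences of $w^{\pm 1}$ in $g$, in $h$, and in $gh$, and the only occurrences that fail to cancel between $\phi_w(g) + \phi_w(h) - \phi_w(gh)$ are those straddling the "seam" where $g$ and $h$ meet — and there are at most $O(|w|)$ of those. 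So $\omega(g,h)$ is determined, up to an error bounded in terms of $|w|$, by a bounded-size neighbourhood of the midpoint of the triangle $e, g, gh$. The task is to translate "bounded neighbourhood of the midpoint" into the combinatorial gadget $N_\Delta$. Concretely: if $N = N_\Delta((g,h),(g',h'))$ is large, then the $\Delta$-triangles of $(g,h)$ and $(g',h')$ agree on the first $N$ pieces of each $c$-side and have identical $r$-parts; since each piece of $\mathcal P$ has length at least $1$, agreement on $N$ pieces forces agreement of the actual reduced words in a radius $\ge N$ around the centre. An occurrence of $w^{\pm1}$ straddling the seam involves only letters within distance $|w|$ of the centre, so if $N \ge |w|$ the seam-straddling occurrences for $(g,h)$ and $(g',h')$ are in bijection and $\omega(g,h) = \omega(g',h')$. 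Hence we may take $s_j = 0$ for $j \ge |w|$ and $s_j = 2\,\mathrm{D}(\phi_w)$ (twice the defect, a valid uniform bound on $|\omega|$) for $j < |w|$; this is a finitely supported, hence summable, sequence, and $S_{\omega,\Delta} = 2|w|\,\mathrm{D}(\phi_w)$.

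The main obstacle is the bookkeeping in the last step: carefully relating the piece-indexed quantity $N_\Delta$ to the letter-length radius around the geometric centre of the triangle, and then arguing that the set of occurrences of $w^{\pm1}$ contributing to $\omega(g,h)$ — i.e. those not internal to $g$, $h$, or $gh$ individually — is a function only of the letters within distance $|w|$ of that centre. One must be a little careful about the three corners $(\underline c_1),(\underline c_2),(\underline c_3)$ and the possibility that the "overlap" words $\bar c_i$ are short (shorter than $|w|$), in which case an occurrence could straddle two of the three sides at once; but since the total $r$-part has length $\le 3R$ and we are only claiming \emph{equality} of $\omega$ once $N$ exceeds $|w|$ (with all small $N$ absorbed into the crude uniform bound), this edge case does not actually obstruct the argument — it only affects the constant. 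Everything else is a direct unwinding of the definitions of $\nu_w$, $\delta^1$, and $N_\Delta$.
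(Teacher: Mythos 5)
Your proof is correct and follows essentially the same route as the paper: item (1) is read off from the observation in Proposition \ref{prop: decompositions define quasimorphisms} that $\delta^1\phi_{\lambda,\Delta}$ depends only on the $r$-part, and item (2) uses the fact that $\delta^1\phi_w$ depends only on a radius-$|w|$ neighbourhood of the triangle's centre. The one point where you add detail that the paper leaves implicit is the translation from the piece-indexed quantity $N_\Delta$ to a letter-length radius (each piece has length $\ge 1$), which is exactly the justification the paper elides when it asserts that $N_\Delta \ge m$ forces $\delta^1\phi(g,h) = \delta^1\phi(g',h')$; also note that the paper's usage of ``Brooks quasimorphism'' always refers to the non self-overlapping case, so the parenthetical worry in your proposal is moot.
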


\begin{proof}
To see $(1)$ observe that the proof of Proposition \ref{prop: decompositions define quasimorphisms} 
shows that $\delta^1 \phi(g,h)$ does not depend on the $c$-part of the $\Delta$-triangle of $(g,h)$. Hence if $N_\Delta((g,h),(g',h')) \geq 1$, then $\delta^1 \phi(g,h) = \delta^1 \phi(g',h')$.

 For (\ref{item:brooks is continuous}), suppose that  $\delta^1 \phi$ is a bounded cocycle induced by a Brooks quasimorphism  $\phi$ on a word $w$ and suppose that the length of $w$ is $m$. The value of the Brooks cocycle  $\delta^1 \phi(g,h)$ just depends on the $m$-neighbourhood of the midpoint of the tripod with endpoints $e, g, gh$ in the Cayley graph.
 Hence, whenever $N_\Delta((g,h),(g',h')) \geq m$ we have that
 $\delta^1 \phi(g,h) = \delta^1 \phi(g',h')$. Note that this implies that Brooks quasimorphisms are $\Delta$-continuous for \emph{any} decomposition $\Delta$.
\end{proof}

\subsection{Triangles and quadrangles in a tree} \label{subsec:triangles and quadrangles}

Let $g,h \in F$. It is easy to see that there are unique elements $t_1,t_2,d \in F$ such that $g = t_1^{-1} d$ and $h = d^{-1} t_2$ as reduced words and that $t_1$, $t_2$ and $d$ are the paths of the tripod with endpoints $1, g, gh$ in the Cayley graph of $F$.
We will call $d$ the \emph{common 2-path} of $(g,h)$.

For three elements $g,h,i \in F$ there are three different cases how the geodesics between the points $1, g, gh, ghi$ in the Cayley graph of $F$ can be aligned. See Figure \ref{fig:Figure4}.
\begin{enumerate}
\item (Figure \ref{fig:Figure4_1}): There are elements $t_1, \ldots, t_5$ such that
 $g = t_1 t_2$, $h = t_2^{-1} t_3 t_4$, $i = t_4^{-1} t_5$ as reduced words.
 \item (Figure \ref{fig:Figure4_2}): There are elements $t_1, \ldots, t_5$ such that
 $g = t_1 t_2 t_3$, $h = t_3^{-1} t_4$, $i = t_4^{-1} t_2^{-1} t_5$ as reduced words.
 \item (Figure \ref{fig:Figure4_3}): There are elements $t_1, \ldots, t_4$ and $c$ such that
$g = t_1^{-1} c t_2$, $h = t_2^{-1} c^{-1} t_3$, $i = t_3^{-1} c t_4$ as reduced words.
\end{enumerate}
We will say that the \emph{common-3-path} of $(g,h,i)$ is empty in the first two cases and $c$ in the third case.
 \begin{figure}
  \centering

  \subfloat[]{\includegraphics[width=0.3\textwidth]{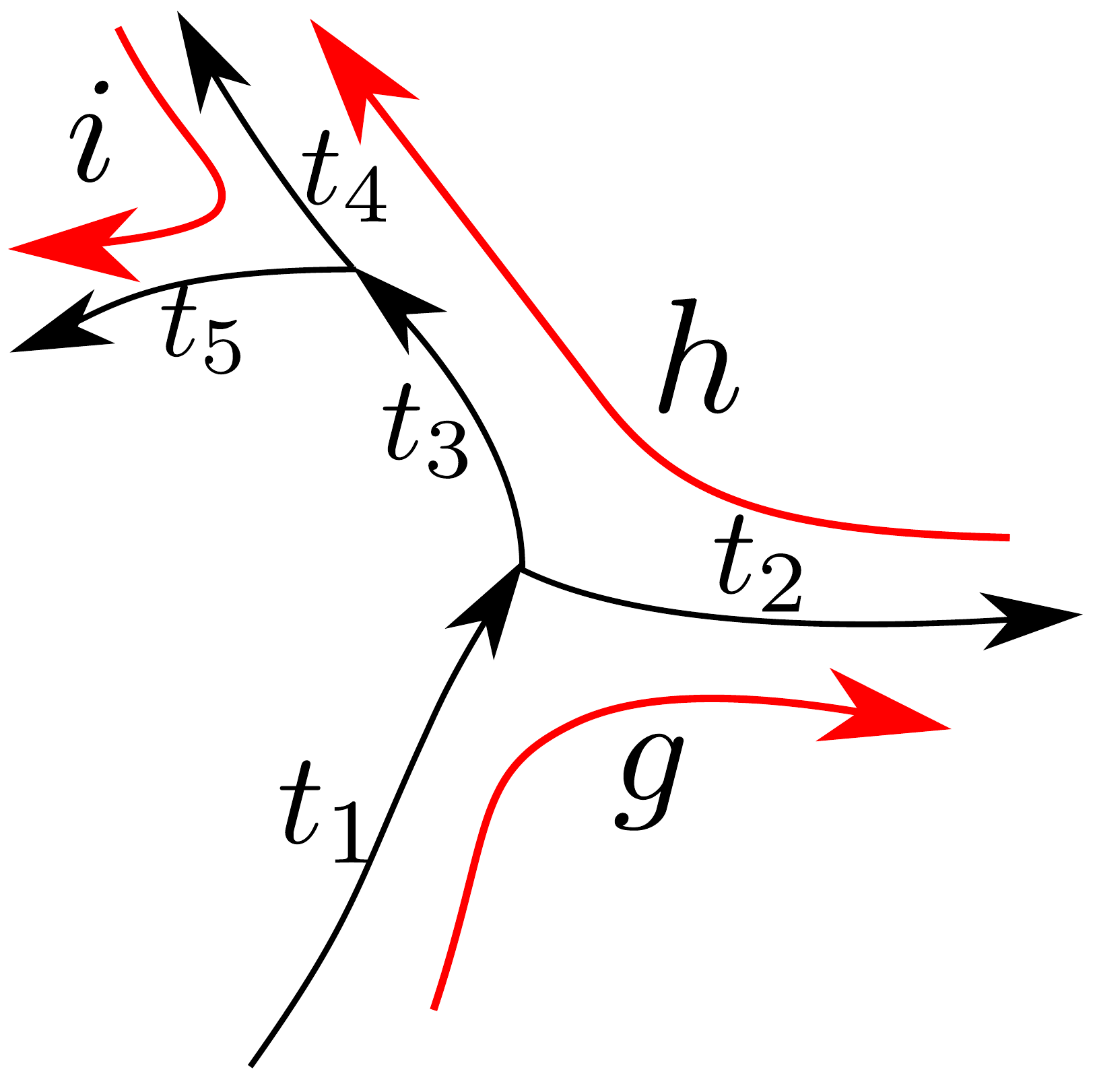} \label{fig:Figure4_1}} 
  \hfill
  \subfloat[]{\includegraphics[width=0.3\textwidth]{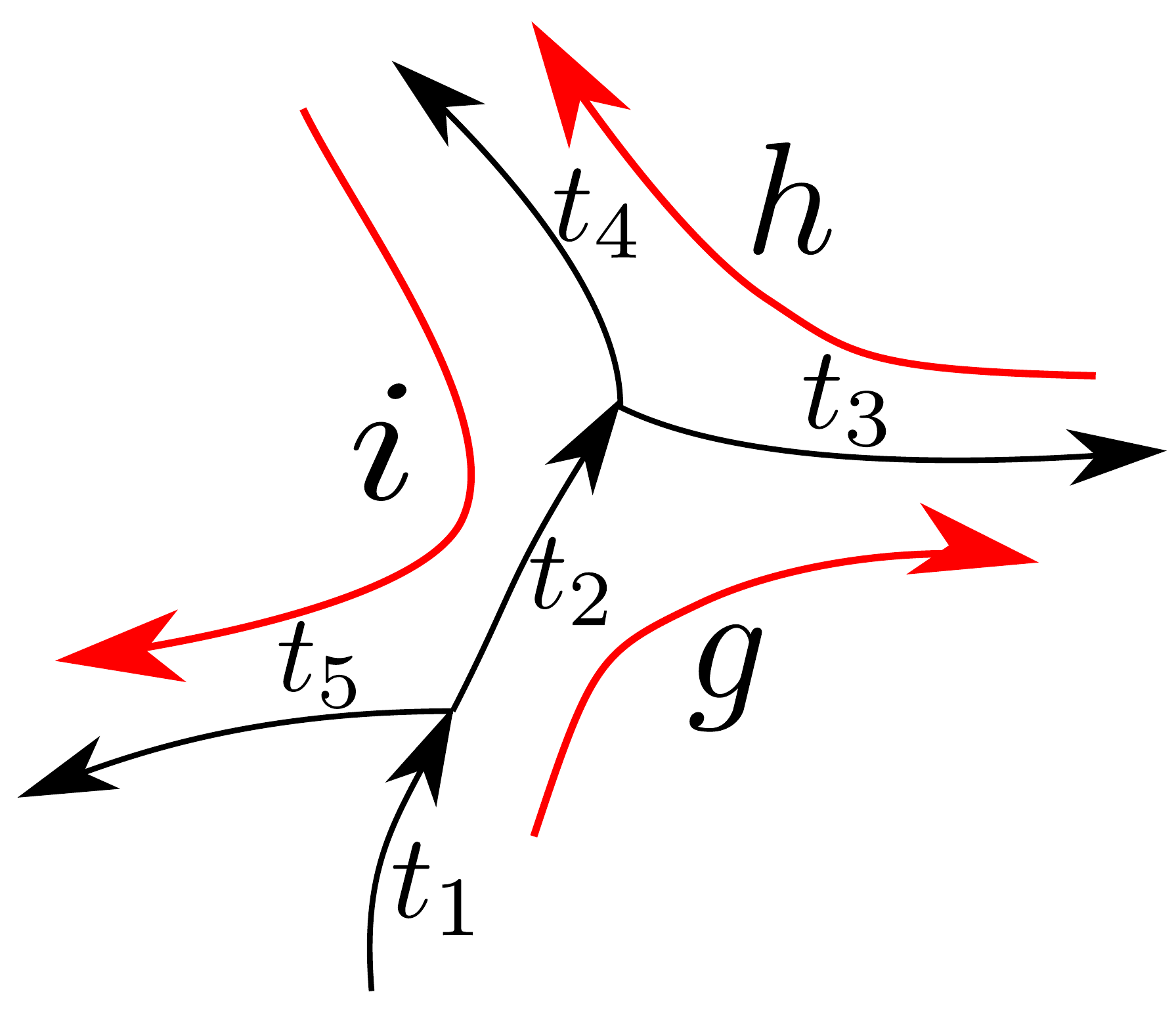} \label{fig:Figure4_2}}
  \hfill
  \subfloat[]{\includegraphics[width=0.3\textwidth]{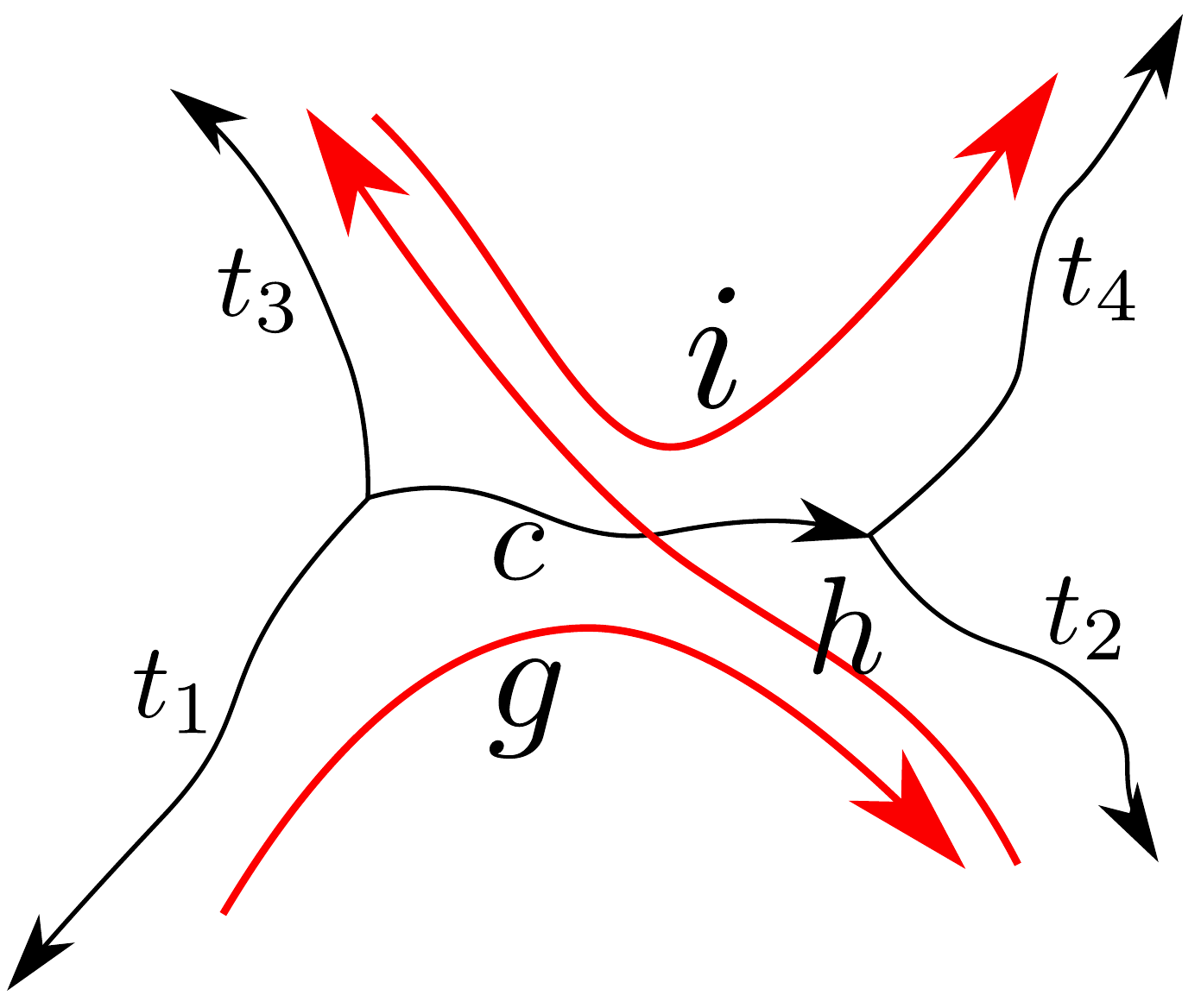} \label{fig:Figure4_3}}
  
\label{Figure4.2}
  \caption{Different cases how $g$, $h$ and $i$ are aligned} \label{fig:Figure4}
\end{figure}

\section{Constructing the bounded primitive} \label{sec:the technical theorem}

Recall that $F$ is a non-abelian group and let $\Delta$ be a decomposition of $F$; see Definition \ref{def:decomposition}. Moreover, let $\phi \col F \to \R$ be a $\Delta$-decomposable quasimorphism (see Definition \ref{defn:decomposable quasimorphism}) and let $\omega \in C^2_b(F, \R)$ be a $\Delta$-continuous symmetric $2$-cocycle (see Definition \ref{def:continuous}).
We define the map $\zeta \in C^3(F,\R)$ by setting
$$
\zeta \col (g,h,i) \mapsto \sum_{j=1}^{k} \phi(g_j) \omega(g_{j+1} \cdots g_k h, i)
$$
for $\Delta(g) = (g_1, \ldots, g_k)$. Moreover, define the maps $\eta, \gamma \in C^2(F,\R)$ by setting
\begin{itemize}
\item $\eta \col (g,h) \mapsto \zeta(g,1,h)$ and
\item $\gamma \col (g,h) \mapsto \frac{1}{2}\Big( \zeta(d,d^{-1},d) + \zeta(d^{-1},1,d) \Big)$ for $d$ the common $2$-path of $(g,h)$; see Subsection \ref{subsec:triangles and quadrangles}.
\end{itemize}

We will show the following theorem:
\begin{theorem} \label{thm:technical}
Let $\phi$ be a $\Delta$-decomposable quasimorphism and let $\omega$ be a symmetric, $\Delta$-continuous $2$-cocycle. Moreover, let $\gamma$ and $\eta$ be as above.
Then $\beta \in C^3(F, \R)$ defined by setting
$$
\beta \col (g,h,i) \mapsto \phi(g) \omega(h,i) + \delta^2 \gamma(g,h,i) + \delta^2 \eta(g,h,i)
$$
is bounded, i.e.\ $\beta \in C^3_b(F, \R)$.
\end{theorem}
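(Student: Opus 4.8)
The plan is to show that the unbounded parts of $\phi(g)\omega(h,i)$, $\delta^2\gamma$ and $\delta^2\eta$ cancel, leaving only contributions that are controlled by the boundedness of $\omega$ and the $\Delta$-continuity hypothesis. First I would unwind $\delta^2\eta$. By definition $\eta(g,h)=\zeta(g,1,h)=\sum_{j=1}^k\phi(g_j)\,\omega(g_{j+1}\cdots g_k,h)$ for $\Delta(g)=(g_1,\dots,g_k)$; applying $\delta^2$ and using that $\Delta$ is infix closed (property \eqref{property:suffix_closed}) to relate $\Delta(g)$, $\Delta(gh)$ and the $c$-/$r$-parts of the $\Delta$-triangle of $(g,h)$, I expect $\delta^2\eta(g,h,i)$ to split into a ``main term'' built from $\phi$ on the pieces of $g,h,gh$ times values of $\omega$, plus a genuinely bounded remainder coming from the pieces in the $r$-part (which number at most $R$, and on which $\phi$ contributes $\lambda$-values bounded by $\|\lambda\|_\infty$, while $\omega$ is bounded). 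The telescoping identity $\sum_j\phi(g_j)=\phi(g)$ together with $\delta^1\phi(g_j,g_{j+1}\cdots g_k)=0$ from Proposition \ref{prop: decompositions define quasimorphisms} is what makes the algebra collapse; this mimics the ``honest cocycle has a coboundary on a free group'' argument advertised in the introduction.

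Next I would handle $\phi(g)\omega(h,i)+\delta^2\eta(g,h,i)$ together. The key algebraic observation the paper flags is that $(g,h,i)\mapsto\phi(g)\delta^1\psi(h,i)$ — here with $\omega=\delta^1\psi$, though I would try to use only that $\omega$ is a $\Delta$-continuous cocycle — ``behaves like an honest cocycle with respect to $\Delta$''. Concretely, I would verify that after subtracting $\delta^2\eta$, the function $\phi(g)\omega(h,i)$ is, up to a bounded error, equal to something depending only on the common $2$-paths/common $3$-path data of $(g,h,i)$ described in Subsection \ref{subsec:triangles and quadrangles}. In the three cases of Figure \ref{fig:Figure4}, the ``center'' of the configuration is either empty or a single common $3$-path $c$; the $\Delta$-continuity of $\omega$ (Definition \ref{def:continuous}) says $\omega$-values on triangles agreeing far into the center differ by at most $s_N$ with $\sum s_N<\infty$, so summing $\phi$ against such nearly-constant $\omega$-values over the pieces of a long common path produces a convergent, hence bounded, quantity rather than a divergent one. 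This is the step where the summable sequence $(s_j)$ and the constant $S_{\omega,\Delta}$ enter.

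Then I would show the role of $\gamma$ is to symmetrize away the last obstruction. The term $\delta^2\gamma(g,h,i)$ depends only on the common $2$-paths $d$ of the three pairs $(g,h)$, $(h,i)$, $(g,hi)$ (equivalently $(gh,i)$); in the third case of Figure \ref{fig:Figure4} these common $2$-paths all involve the common $3$-path $c$, and the particular combination $\tfrac12(\zeta(d,d^{-1},d)+\zeta(d^{-1},1,d))$ is designed so that $\delta^2\gamma$ cancels exactly the residual ``center'' contribution $\phi(\text{piece of }c)\cdot\omega(\text{something near }c)$ left over from the previous paragraph, across all three cases simultaneously — using symmetry of $\phi$ and of $\omega$ to match the two orientations. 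After this cancellation every surviving term is either a value of the bounded cochain $\omega$ times a single $\lambda$-value, summed boundedly many times, or a tail $\sum_{j\ge N}s_j\le S_{\omega,\Delta}$; adding the finitely many such contributions with explicit constants ($R$, $\|\lambda\|_\infty$, $\|\omega\|_\infty$, $S_{\omega,\Delta}$, $D_\phi$) yields an explicit bound on $\|\beta\|_\infty$.

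The main obstacle I anticipate is the bookkeeping in the third case of Figure \ref{fig:Figure4} (the configuration with a nontrivial common $3$-path $c$): there the pieces of $g,h,i$ all pass through a shared segment, $\Delta$ need not split that segment the same way as seen from $g$, from $h$, or from $i$ (the $r$-parts can differ by up to $R$ pieces), and one must check that $\delta^2\gamma$ — which only sees the $2$-path data — nonetheless absorbs the $3$-path discrepancy up to a bounded error. Getting the orientations and the factor $\tfrac12$ to line up so that the unbounded ``$\phi$ summed along $c$'' pieces cancel between the $\phi(g)\omega(h,i)$ term, $\delta^2\eta$ and $\delta^2\gamma$, rather than reinforce, is the delicate point; everything else is telescoping plus the triangle inequality against $\sum s_j$.
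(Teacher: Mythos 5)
Your outline tracks the paper's proof closely: rewrite $\phi(g)\omega(h,i)$ via the telescoping sum $\zeta$, subtract $\delta^2\eta$ to reduce to common $2$-path data, use $\Delta$-continuity of $\omega$ and the summable sequence $(s_j)$ to push the reduction further to the common $3$-path $c$, and then let $\delta^2\gamma$ absorb the residual. That is exactly the paper's strategy. However, your plan glosses over the step where the argument would actually fail without a further nontrivial input: after subtracting $\delta^2\eta$, the residual is not merely ``something concentrated near $c$''; one must identify it, up to bounded error, with $\theta(c)$ for $\theta(g) := \zeta(g,g^{-1},g) + \zeta(g^{-1},1,g)$, and then prove that $\theta$ is a symmetric \emph{quasimorphism} on $F$. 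This is Proposition \ref{prop:theta symmetric quasimorph}, and it is not automatic: the quasi-additivity of $\theta$ (Claim \ref{claim:theta straight additive}) needs all three properties of $\zeta$ established in Proposition \ref{prop:properties of zeta}, not just the symmetry of $\phi$ and $\omega$ that you invoke.

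Why this matters: your final cancellation step rests on $\delta^2\gamma$ being uniformly close to $-\theta(c)$. Since $\gamma(g,h) = \tfrac12\theta(d)$ with $d$ the common $2$-path of $(g,h)$, unpacking $\delta^2\gamma$ in the configuration of Figure \ref{fig:Figure4_2} gives $\tfrac12\bigl(\theta(t_4) - \theta(t_4 t_2) + \theta(t_2 t_3) - \theta(t_3)\bigr)$, and in Figure \ref{fig:Figure4_3} gives $\tfrac12\bigl(\theta(c^{-1}t_3) - \theta(t_3) + \theta(t_2) - \theta(c t_2)\bigr)$; neither of these is bounded or close to $-\theta(c)$ unless $\theta$ has uniformly bounded defect. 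The paper makes this an explicit, reusable statement (Lemma \ref{lemma:common-piece-cocycle}): for any symmetric quasimorphism $\rho$, the cochain $\kappa(g,h) = \rho(\text{common $2$-path of }(g,h))$ satisfies $\delta^2\kappa(g,h,i) \approx -2\rho(c)$. Without first establishing that $\theta$ qualifies as such a $\rho$, the ``symmetrizing'' you describe does not close the argument; ``everything else is telescoping plus the triangle inequality against $\sum s_j$'' understates what remains.
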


We will see in Subsection \ref{subsec:theorems a and b} that $\beta$ will be the bounded primitive for the cup products studied in this paper.
Before we prove this theorem in Subsection \ref{subsec:proof of theorem technical}, we will give an idea of the proof in Subsection \ref{subsec:idea of technical theorem}.
This will be inspired by a construction of coboundaries to $3$-cocycles which we recall in Subsection \ref{subsec:algebraic proof of free group trivial homology}.

\subsection{Idea of the proof of Theorems \ref{thm:main} and \ref{thm:technical}} \label{subsec:idea of technical theorem}
Theorem \ref{thm:main} 
states that $[\delta^1 \phi \smile \omega] = 0$ in $\Hrm^4_b(F, \R)$ for $\phi$ a $\Delta$-decomposable quasimorphism and $\omega$ a $\Delta$-continuous cocycle. Equivalently, there is a bounded primitive of the map
 $\delta^3 \tau \in C^3(F,\R)$, where $\tau$ is given by $\tau \col (g,h,i) \mapsto \phi(g) \omega(h,i)$ since $\delta^3 \tau = \delta^1 \phi \smile \omega$.
Note that $\tau$ is a priori not an interesting function for bounded cohomology: It is neither bounded nor is it a cocycle.

Recall that a map $\alpha \in C^3(F, \R)$ satisfies the cocycle condition if and only if for all $g,g',h,i \in F$ we have that
$$
\delta^3 \alpha(g, g', h, i) = 0.
$$
As $\Hrm^3(F, \R) = 0$, we know that there is some $\epsilon \in C^2 (F, \R)$ such that $\delta^2 \epsilon = \alpha$. We will give a purely algebraic construction of such an $\epsilon$ in terms of $\alpha$, provided $\alpha$ satisfies certain weak conditions stated in Subsection \ref{subsec:algebraic proof of free group trivial homology}, Equation \ref{equ:alternating_condition}. 

Observe that $\tau$ does not satisfy the cocycle condition for \emph{all} $g,g',h,i \in F$. However, $\tau$ satisfies the cocycle condition in certain cases:
 Proposition \ref{prop: decompositions define quasimorphisms} shows that if $g, g' \in F$ satisfy that $\Delta(g \cdot g') = (\Delta(g)) \cdot (\Delta(g'))$ then
$$
\delta^3 \tau(g, g', h, i) = 0
$$
for all $h,i \in F$.
Following the techniques of Subsection \ref{subsec:algebraic proof of free group trivial homology} we will construct an $\epsilon \in C^2(F, \R)$ such that $\delta^2 \epsilon$ is \emph{boundedly close} to $\tau$. This is, such that the map $\beta = \tau - \delta^2 \epsilon$ is bounded, i.e.\ $\beta \in C_b^3(F, \R)$.
This will imply that
$$
\delta^3 \beta = \delta^3 \tau - \delta^3 \delta^2 \epsilon = \delta^1 \phi \smile \omega
$$
and hence the cup product has a bounded primitive and is trivial in bounded cohomology.

\subsection{Constructing $2$-coboundaries from $3$-cocycles} \label{subsec:algebraic proof of free group trivial homology}
Let $\alpha \in C^3(F, \R)$ be a $3$-cocycle i.e.\ a map such that $\delta^3 \alpha = 0$.
We will show how to construct a map $\epsilon \in C^2(F, \R)$ such that $\delta^2 \epsilon = \alpha$.
We emphasize that this subsection just motivates the strategy of the proof of Theorem \ref{thm:technical}. This theorem will be proved in detail in Subsection \ref{subsec:proof of theorem technical} and the proof can be understood without reading this subsection. In both subsections, the $\eta$ and the $\zeta$ term will play analogous r\^{o}le.

To simplify our calculations we will assume that $\alpha$ is a cocycle and moreover satisfies
\begin{eqnarray} \label{equ:alternating_condition}
\alpha(g,h,1) = \alpha(g,1,h) = \alpha(1,g,h) = \alpha(g,g^{-1},h) = 0 \mbox{ for all $g, h \in F$}.
\end{eqnarray} 
We note that \emph{alternating cochains} in the sense of Subsection 4.10 of \cite{frigerio} satisfy (\ref{equ:alternating_condition}) and that such maps may be used to fully compute $\Hrm^3(F, \R)$.

Let $\alpha$ be as above and recall that the cocycle condition implies that for all $g, g', h, i \in F$ we have that
\begin{equation} \label{equ:cocycle cond alpha}
\delta^3 \alpha(g,g',h,i) = \alpha(g',h,i) - \alpha(g g',h,i) + \alpha(g,g'h,i) - \alpha(g,g',hi) + \alpha(g,g',h) = 0.
\end{equation}

In a first step we see how $\alpha$ may be rewritten as a sum of elements of the form $\alpha(\xtt, g', h')$, where $\xtt$ is a letter and $g', h' \in F$.
Define $\zeta \in C^3(F, \R)$ by setting
$$
\zeta \col (g,g',h) \mapsto \sum_{j=1}^k \alpha(\xtt_j, \xtt_{j+1} \cdots \xtt_k g', h)
$$ 
where $g = \xtt_1 \cdots \xtt_k$ is the reduced word representing $g$.
We claim that
\begin{claim} \label{claim:ordinary cohomology zeta}
Let $\alpha \in C^3(F, \R)$ be a cocycle satisfying (\ref{equ:alternating_condition}). Then
$$
\alpha(g,h,i) =  \zeta(g,h,i) - \zeta(g,1,hi) + \zeta(g,1,h)
$$
for all $g,h,i \in F$.
\end{claim}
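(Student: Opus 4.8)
The plan is to verify the identity
\[
\alpha(g,h,i) = \zeta(g,h,i) - \zeta(g,1,hi) + \zeta(g,1,h)
\]
by a direct computation using the cocycle condition (\ref{equ:cocycle cond alpha}) to peel off the letters of $g$ one at a time. Write $g = \xtt_1 \cdots \xtt_k$ as a reduced word and, for convenience, set $g^{(j)} = \xtt_j \xtt_{j+1}\cdots \xtt_k$ so that $g^{(1)} = g$, $g^{(j)} = \xtt_j g^{(j+1)}$, and $g^{(k+1)} = 1$. The key step is to apply the cocycle identity $\delta^3\alpha(\xtt_j, g^{(j+1)}, h, i) = 0$, which reads
\[
\alpha(g^{(j+1)},h,i) - \alpha(g^{(j)},h,i) + \alpha(\xtt_j, g^{(j+1)}h, i) - \alpha(\xtt_j, g^{(j+1)}, hi) + \alpha(\xtt_j, g^{(j+1)}, h) = 0,
\]
i.e.
\[
\alpha(g^{(j)},h,i) = \alpha(g^{(j+1)},h,i) + \alpha(\xtt_j, g^{(j+1)}h, i) - \alpha(\xtt_j, g^{(j+1)}, hi) + \alpha(\xtt_j, g^{(j+1)}, h).
\]
Summing this telescoping relation over $j = 1, \ldots, k$ and using $\alpha(g^{(k+1)},h,i) = \alpha(1,h,i) = 0$ from (\ref{equ:alternating_condition}) gives
\[
\alpha(g,h,i) = \sum_{j=1}^k \alpha(\xtt_j, g^{(j+1)}h, i) - \sum_{j=1}^k \alpha(\xtt_j, g^{(j+1)}, hi) + \sum_{j=1}^k \alpha(\xtt_j, g^{(j+1)}, h).
\]
Now I would observe that the three sums on the right are, by the definition of $\zeta$, precisely $\zeta(g,h,i)$, $\zeta(g,1,hi)$, and $\zeta(g,1,h)$ respectively: indeed $\zeta(g,h,i) = \sum_j \alpha(\xtt_j, g^{(j+1)} h, i)$ directly from the definition (taking $g' = h$), while $\zeta(g,1,hi) = \sum_j \alpha(\xtt_j, g^{(j+1)}\cdot 1, hi) = \sum_j \alpha(\xtt_j, g^{(j+1)}, hi)$ and similarly for $\zeta(g,1,h)$. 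This yields the claimed identity.

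The only point requiring a little care — and the one I would flag as the main obstacle, although it is minor — is the edge behaviour of the sums when $g$ is trivial or has length one, and making sure the indexing conventions ($g^{(k+1)} = 1$, empty sums) are consistent; one checks that when $k=0$ both sides vanish by (\ref{equ:alternating_condition}), and when $k \geq 1$ the telescoping is exactly as above. One should also note that the expressions $g^{(j+1)}h$, $g^{(j+1)}$, etc., need not be reduced words, but this is irrelevant since $\zeta$ and the cocycle identity are defined for all group elements; only the decomposition of the \emph{first} argument $g$ into its letters $\xtt_j$ is used, and that decomposition is the genuine reduced word. This completes the proof of Claim \ref{claim:ordinary cohomology zeta}.
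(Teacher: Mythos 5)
Your proof is correct and is exactly the ``direct computation'' the paper alludes to: you peel off the letters of $g$ one at a time via the cocycle identity $\delta^3\alpha(\xtt_j, g^{(j+1)}, h, i) = 0$, telescope, kill the boundary term with $\alpha(1,h,i)=0$, and recognise the three resulting sums as the definitions of $\zeta(g,h,i)$, $\zeta(g,1,hi)$ and $\zeta(g,1,h)$. The care you take with the empty-word edge case and with the fact that only the reduced decomposition of the \emph{first} argument matters is correct and worth noting, though the paper leaves it implicit.
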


\begin{proof}
direct computation.
\end{proof}
Now define $\eta \in C^2(F, \R)$ by setting
$$
\eta \col (g,h) \mapsto \zeta(g,1,h).
$$
We then see that
\begin{eqnarray*}
\alpha(g,h,i) + \delta^2 \eta(g,h,i) &=& \zeta(g,h,i) + \zeta(h,1,i) - \zeta(gh,1,i)
\end{eqnarray*}
for all $g,h,i \in F$.

\begin{claim} \label{claim:ordinary cohomology zeta two piece}
We have that 
$$
\zeta(g,h,i) + \zeta(h,1,i) - \zeta(gh,1,i) = 
\zeta(d,h,i) + \zeta(d^{-1}, d h, i)
$$
for all $g,h,i \in F$, where $d$ is the common $2$-path of $(g,h)$.
\end{claim}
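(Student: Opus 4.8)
The plan is to prove the identity by a direct, if careful, manipulation of the definition of $\zeta$, using the reduced-word structure of $g$, $h$, and the cancellation that occurs when multiplying $g$ and $h$. Write $d$ for the common $2$-path of $(g,h)$, so that $g = t_1^{-1} d$ and $h = d^{-1} t_2$ as reduced words, and consequently $gh = t_1^{-1} t_2$ is reduced. First I would expand all three terms $\zeta(g,h,i)$, $\zeta(h,1,i)$, $\zeta(gh,1,i)$ using the reduced-word expressions: for $\zeta(g,h,i)$ split the word for $g$ at the point where $t_1^{-1}$ ends and $d$ begins, so the sum over letters of $g$ breaks into a sum over the letters of $t_1^{-1}$ and a sum over the letters of $d$; similarly for $h = d^{-1} t_2$ and for $gh = t_1^{-1} t_2$. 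The key bookkeeping point is that in the definition $\zeta(g,g',h) = \sum_j \alpha(\xtt_j, \xtt_{j+1}\cdots \xtt_k g', h)$ the tail $\xtt_{j+1}\cdots \xtt_k g'$ attached to the $j$-th letter of $g$ is exactly the remaining suffix of $g$ times $g'$, so when I split $g$ at $t_1^{-1}\mid d$, the letters of $t_1^{-1}$ contribute $\alpha(\xtt_j, (\text{suffix of } t_1^{-1})\, d\, h, i)$ and the letters of $d$ contribute $\alpha(\xtt_j, (\text{suffix of } d)\, h, i)$; since $dh = t_2$, the latter is precisely the $d$-part of $\zeta(d,h,i)$.

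Next I would compare terms. The claim amounts to showing
\[
\zeta(g,h,i) - \zeta(gh,1,i) + \zeta(h,1,i) = \zeta(d,h,i) + \zeta(d^{-1},dh,i),
\]
and the strategy is to match the "$t_1$-part", the "$t_2$-part", and the "$d$-part" of the two sides separately. On the left, the letters of $t_1^{-1}$ appearing in $\zeta(g,h,i)$ produce $\alpha(\xtt_j, (\text{suffix of }t_1^{-1})\cdot t_2, i)$ (using $dh = t_2$ and $\bar{g}h = t_1^{-1}t_2$), while the same letters of $t_1^{-1}$ appear in $\zeta(gh,1,i)$ with identical tails, so they cancel in $\zeta(g,h,i) - \zeta(gh,1,i)$; the letters of $t_2$ in $\zeta(gh,1,i)$ and in $\zeta(h,1,i)$ likewise have matching tails and cancel. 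What survives on the left is exactly the $d$-part of $\zeta(g,h,i)$, which as noted above equals the $d$-part of $\zeta(d,h,i)$, i.e. $\zeta(d,h,i)$ itself since $\Delta_{triv}(d) = \Delta_{triv}(d)$ and $d$ has no $t_1^{-1}$ part. Then the remaining term $\zeta(d^{-1},dh,i)$ on the right must be accounted for — and here is where I expect to pay attention — it should match the contribution coming from reindexing the $t_1^{-1}$ letters: writing $d^{-1}$ as a reduced word, the letters of $d^{-1}$ with their tails $(\text{suffix of }d^{-1})\cdot d h \cdot$ telescope against what is left over. I would set up the indices explicitly (letters of $t_1$ run in one order, letters of $d^{-1}$ in the reverse of the letters of $d$) and verify the match letter by letter, invoking $\bar{t_1}^{-1}\bar d = g$ and the normalization $\zeta(g,1,h) = \zeta(g,1,h)$; the condition (\ref{equ:alternating_condition}), in particular $\alpha(1,g,h)=0$, is what kills the boundary term when a suffix becomes trivial.

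The main obstacle is purely organizational: keeping the four sums indexed consistently and tracking exactly which "tail" is attached to each letter after the cancellations $g\cdot h \rightsquigarrow t_1^{-1}t_2$. I do not expect any conceptual difficulty — no use of the cocycle condition (\ref{equ:cocycle cond alpha}) is needed here, only the definition of $\zeta$, the reduced-word decomposition $g = t_1^{-1}d$, $h = d^{-1}t_2$, and the vanishing conditions (\ref{equ:alternating_condition}) to handle the degenerate endpoint terms. Once the three groups of letters ($t_1$-letters, $t_2$-letters, $d$-letters) are separated and shown to cancel or match in pairs, the identity follows. The analogous computation in the bounded setting (Theorem \ref{thm:technical}) will then replace the splitting "at the common $2$-path" by the splitting along the $c$-part/$r$-part of the $\Delta$-triangle, with the $\Delta$-continuity of $\omega$ controlling the error terms that here vanish exactly.
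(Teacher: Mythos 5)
Your approach is essentially the paper's: both rest on the additivity identity $\zeta(uv,g',h) = \zeta(u,vg',h) + \zeta(v,g',h)$ for $uv$ reduced (the paper's Equation (\ref{equ:zeta classic additive}), immediate from the definition of $\zeta$), applied to the splittings $g = t_1^{-1}d$, $h = d^{-1}t_2$, $gh = t_1^{-1}t_2$, followed by cancellation of the $t_1^{-1}$- and $t_2$-parts. Two small corrections: the surviving term $\zeta(d^{-1},dh,i)$ is simply the $d^{-1}$-part of $\zeta(h,1,i)$ (not a ``reindexing of the $t_1^{-1}$ letters''), and the vanishing conditions (\ref{equ:alternating_condition}) are not actually needed for this claim — there are no degenerate endpoint terms, since an empty factor just contributes an empty sum.
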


\begin{proof}
We will prove this by an explicit calculation.
Observe that it is immediate that if $u,v \in F$ are such that $uv$ is reduced then 
\begin{equation} \label{equ:zeta classic additive}
\zeta(uv,g',h) = \zeta(u,vg',h) + \zeta(v,g',h).
\end{equation}
Now rewrite $g = t_1^{-1} d$ and $h = d^{-1} t_2$, where $d$ is the common 2-path of $(g,h)$; see Subsection \ref{subsec:triangles and quadrangles}.
Then by (\ref{equ:zeta classic additive}) we see that
\begin{itemize}
\item $\zeta(g,h,i)=\zeta(t_1^{-1}, d h, i) + \zeta(d, h, i)$
\item $\zeta(h,1,i)= \zeta(d^{-1}, d h, i) + \zeta(t_2, 1, i)$ 
\item $\zeta(gh,1,i)=\zeta(t_1^{-1},d h,i) + \zeta(t_2,1,i)$
\end{itemize}
Hence
$$
\zeta(g,h,i) + \zeta(h,1,i) - \zeta(gh,1,i)=\zeta(d,h,i) + \zeta(d^{-1}, d h, i).
$$
\end{proof}

\begin{claim} \label{claim: ordinary cohomology coboundary}
We have that 
$\zeta(g,h,i) + \zeta(h,1,i) - \zeta(gh,1,i) = 0$ and hence that $\alpha(g,h,i) = \delta^2 \epsilon$ for $\epsilon = - \eta$.
\end{claim}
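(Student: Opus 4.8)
The plan is to finish the chain of claims by showing that the expression $\zeta(g,h,i) + \zeta(h,1,i) - \zeta(gh,1,i)$, which Claim \ref{claim:ordinary cohomology zeta two piece} has already rewritten as $\zeta(d,h,i) + \zeta(d^{-1}, dh, i)$ for $d$ the common $2$-path of $(g,h)$, in fact vanishes identically. The key point is that $d$ and $d^{-1}$ are ``inverse reduced words'', so the two $\zeta$-terms should cancel letter by letter. Concretely, if $d = \xtt_1 \cdots \xtt_k$ is the reduced word for $d$, then $d^{-1} = \xtt_k^{-1} \cdots \xtt_1^{-1}$, and I would expand both $\zeta(d,h,i)$ and $\zeta(d^{-1},dh,i)$ using the definition of $\zeta$ as a telescoping sum over the letters of the first argument.

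First I would write out $\zeta(d,h,i) = \sum_{j=1}^{k} \alpha(\xtt_j, \xtt_{j+1}\cdots\xtt_k\, h, i)$ directly from the definition. Then I would write out $\zeta(d^{-1}, dh, i) = \sum_{j=1}^{k} \alpha(\xtt_{k+1-j}^{-1}, \xtt_{k-j}^{-1} \cdots \xtt_1^{-1}\, d h, i)$; reindexing so the sum runs over the same range, the $j$-th term involves the letter $\xtt_j^{-1}$ together with the group element $\xtt_{j-1}^{-1}\cdots\xtt_1^{-1} d h = \xtt_j \cdots \xtt_k h$ (using that $\xtt_{j-1}^{-1}\cdots \xtt_1^{-1} \cdot \xtt_1 \cdots \xtt_k = \xtt_j\cdots\xtt_k$). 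So the $j$-th term of the second sum is $\alpha(\xtt_j^{-1}, \xtt_j \xtt_{j+1}\cdots\xtt_k h, i)$. Pairing the $j$-th terms of the two sums, I need $\alpha(\xtt_j, u, i) + \alpha(\xtt_j^{-1}, \xtt_j u, i) = 0$ where $u = \xtt_{j+1}\cdots\xtt_k h$. This is exactly the instance $g = \xtt_j$, $g' = \xtt_j^{-1}$, $h = u$, $i = i$ of the cocycle identity \eqref{equ:cocycle cond alpha}: it reads $\alpha(\xtt_j^{-1}, u, i) - \alpha(1, u, i) + \alpha(\xtt_j, \xtt_j^{-1} u, i) - \alpha(\xtt_j, \xtt_j^{-1}, ui) + \alpha(\xtt_j, \xtt_j^{-1}, u) = 0$, and by the normalization \eqref{equ:alternating_condition} the terms $\alpha(1,u,i)$, $\alpha(\xtt_j,\xtt_j^{-1},ui)$, $\alpha(\xtt_j,\xtt_j^{-1},u)$ all vanish, leaving $\alpha(\xtt_j^{-1}, u, i) + \alpha(\xtt_j, \xtt_j^{-1} u, i) = 0$; relabeling $u \mapsto \xtt_j u$ gives precisely the needed cancellation. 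Hence every paired term cancels and the whole expression is $0$.

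Finally, combining this with the computation displayed just before Claim \ref{claim:ordinary cohomology zeta two piece}, namely $\alpha(g,h,i) + \delta^2\eta(g,h,i) = \zeta(g,h,i) + \zeta(h,1,i) - \zeta(gh,1,i)$, we get $\alpha(g,h,i) + \delta^2 \eta(g,h,i) = 0$, i.e.\ $\alpha = \delta^2(-\eta)$, so $\epsilon = -\eta$ works. I expect the only real bookkeeping obstacle to be the reindexing in the second sum — keeping straight that stripping letters off the front of $d^{-1}$ corresponds to stripping letters off the back of $d$, and verifying the group-element identity $\xtt_{j-1}^{-1}\cdots\xtt_1^{-1} d h = \xtt_j\cdots\xtt_k h$ — but this is routine once one writes $d$ as an explicit reduced word. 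No cancellation issues arise because $d$ is reduced by construction, so every prefix/suffix manipulation used here is a genuine reduced-word identity and the additivity property \eqref{equ:zeta classic additive} of $\zeta$ applies throughout.
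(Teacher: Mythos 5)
Your proof is correct and follows essentially the same route as the paper: expand $\zeta(d,h,i)$ and $\zeta(d^{-1},dh,i)$ into sums over the letters of $d$, reindex the second sum, and cancel each pair of terms by evaluating $\delta^3\alpha(\dtt_j,\dtt_j^{-1},\dtt_j\cdots\dtt_l h, i)$ together with the normalization \eqref{equ:alternating_condition}. The only difference is cosmetic — you write out the reindexing and the cocycle-identity expansion in more detail than the paper does.
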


\begin{proof}
Let $d$ be the common $2$-path of $(g,h)$ as above. Moreover, suppose that $\dtt_1 \cdots \dtt_l$ is the word representing $d$.
By the previous claim,
$\zeta(g,h,i) + \zeta(h,1,i) - \zeta(gh,1,i) = 
\zeta(d,h,i) + \zeta(d^{-1}, d h, i)$.
We calculate
$$
 \zeta(d,h,i) + \zeta(d^{-1}, d h, i) =   \sum_{j=1}^k \Big( \alpha(\dtt_j, \dtt_{j+1} \cdots \dtt_l h, i) + \alpha(\dtt_j^{-1}, \dtt_j \cdots \dtt_l h, i) \Big).
$$
By evaluating $\delta^3 \alpha(\dtt_j, \dtt_j^{-1}, 
\dtt_j \cdots \dtt_l h, i)$ using property (\ref{equ:alternating_condition}) we have that
$\alpha(\dtt_j^{-1}, \dtt_j \cdots \dtt_l h, i) + \alpha(\dtt_j, \dtt_{j+1} \cdots \dtt_l h, i)=0$.
\end{proof}

Together with Claim \ref{claim:ordinary cohomology zeta two piece} the previous claim implies that $\alpha + \delta^2 \eta = \alpha - \delta^2 \epsilon = 0$.

\subsection{Proof of Theorem \ref{thm:technical}}  \label{subsec:proof of theorem technical}

Let $\Delta$ be a decomposition of $F$ (Definition \ref{def:decomposition}), let $\phi$ be a $\Delta$-decomposable quasimorphism (Definition \ref{defn:decomposable quasimorphism}) and let $\omega$ be a $\Delta$-continuous cocycle (Definition \ref{def:continuous}).
See the previous subsection for a brief discussion on the classical computations that inspired our construction here.
Analogously to Claim \ref{claim:ordinary cohomology zeta}, we will first rewrite the function $(g,h,i) \mapsto \phi(g) \omega(h,i)$  as sum of terms $\phi(g_j) \omega( g', h')$ where $g_j$ will be a piece of a fixed decomposition $\Delta$.
We will construct a map $\epsilon \in C^2(F, \R)$ such that $\delta^2 \epsilon$ is boundedly close to $(g,h,i) \mapsto \phi(g) \omega(h,i)$ by ``treating'' this function as a cocycle on the pieces of $\Delta$ and then performing the calculations of Subsection \ref{subsec:algebraic proof of free group trivial homology}.
For this, define $\zeta \in C^3(F, \R)$ by setting
$$
\zeta(g,g',h) := \sum_{j=1}^{k} \phi(g_j) \omega(g_{j+1} \cdots g_k g', h)
$$
for $\Delta(g) = (g_1, \ldots, g_k)$.
Analogous to Claim \ref{claim:ordinary cohomology zeta} we show:
\begin{prop} \label{prop:rewrite_quasicocycle}
The term $\phi(g) \omega(h,i)$ is equal to 
$$
\zeta(g,h,i) - \zeta(g,1,hi) + \zeta(g,1,h)
$$
for $\zeta \in C^3(F,\R)$ are as above. 
\end{prop}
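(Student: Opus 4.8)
\subsection*{Proof idea for Proposition \ref{prop:rewrite_quasicocycle}}

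The plan is to prove this by a direct termwise computation, exactly parallel to Claim \ref{claim:ordinary cohomology zeta}, using only two facts: that $\omega$ is a genuine $2$-cocycle, and that $\phi$ is $\Delta$-decomposable, so that $\phi$ is additive along the pieces of a proper $\Delta$-sequence.

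First I would fix $\Delta(g) = (g_1, \ldots, g_k)$. Since $\zeta(g, g', h)$ is the sum of $\phi(g_j)\,\omega(g_{j+1} \cdots g_k g', h)$ over the pieces $g_j$, and the three evaluations $\zeta(g,h,i)$, $\zeta(g,1,hi)$, $\zeta(g,1,h)$ differ only in the pair of arguments handed to $\omega$, the combination $\zeta(g,h,i) - \zeta(g,1,hi) + \zeta(g,1,h)$ collapses to
$$
\sum_{j=1}^{k} \phi(g_j) \Big( \omega(g_{j+1} \cdots g_k h, i) - \omega(g_{j+1} \cdots g_k, hi) + \omega(g_{j+1} \cdots g_k, h) \Big),
$$
where for $j = k$ the product $g_{j+1} \cdots g_k$ is read as the empty product, i.e.\ the identity of $F$.

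Next I would apply the cocycle identity $\delta^2 \omega = 0$ to the triple $(g_{j+1} \cdots g_k, h, i)$, which reads
$$
\omega(h, i) - \omega(g_{j+1} \cdots g_k h, i) + \omega(g_{j+1} \cdots g_k, hi) - \omega(g_{j+1} \cdots g_k, h) = 0 .
$$
Hence the parenthesised factor in the $j$-th summand equals $\omega(h,i)$ for every $j$, so the whole sum equals $\big( \sum_{j=1}^{k} \phi(g_j) \big) \omega(h,i)$. Finally, since $\Delta$ is infix closed we have $\Delta(g_j) = (g_j)$, hence $\phi(g_j) = \lambda(g_j)$ and $\sum_{j=1}^{k} \phi(g_j) = \sum_{j=1}^{k} \lambda(g_j) = \phi(g)$ by Definition \ref{defn:decomposable quasimorphism}; this yields $\phi(g)\omega(h,i)$, as desired. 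Alternatively one can organise the same computation as a telescoping sum: with $\tau(g,h,i) = \phi(g)\omega(h,i)$ one has $\delta^3 \tau = \delta^1 \phi \smile \omega$, which by Proposition \ref{prop: decompositions define quasimorphisms} vanishes on each quadruple $(g_j, g_{j+1} \cdots g_k, h, i)$, so the $j$-th summand telescopes into $\tau(g_j \cdots g_k, h, i) - \tau(g_{j+1} \cdots g_k, h, i)$ and the sum collapses to $\tau(g,h,i) - \tau(1,h,i) = \phi(g)\omega(h,i)$; this is precisely the argument of Subsection \ref{subsec:algebraic proof of free group trivial homology} with $\tau$ in place of $\alpha$.

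I do not expect any real obstacle: this proposition is purely the \emph{rewriting} step and the computation is formal. The only points that need a word of care are the empty-product convention at $j = k$ (together with $\phi(1) = 0$, needed only for the telescoping formulation) and invoking the correct earlier results; the genuine difficulty of Theorem \ref{thm:technical} lies in the subsequent boundedness estimate, not in this proposition.
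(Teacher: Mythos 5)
Your proof is correct and essentially matches the paper's: both rely on exactly the two facts you name ($\delta^2\omega = 0$ and the piecewise additivity of $\phi$ along a proper $\Delta$-sequence), and your alternative telescoping formulation via $\delta^3\tau(g_j, g_{j+1}\cdots g_k,h,i)=\delta^1\phi(g_j,g_{j+1}\cdots g_k)\,\omega(h,i)=0$ is precisely the paper's argument. Your primary version merely reorganises it --- apply the cocycle identity to collapse each bracketed factor to $\omega(h,i)$, then sum $\phi(g_j)=\lambda(g_j)$ --- a modest stylistic streamlining rather than a genuinely different route, and you handle the empty-product boundary case and $\omega(1,\cdot)=0$ in the same way.
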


\begin{proof}
Let $\Delta(g) = (g_1, \ldots, g_k)$. 
Observe that for all $j \in \{1, \ldots, k\}$ by Proposition \ref{prop: decompositions define quasimorphisms} we have that
\begin{eqnarray*}
 0= \delta^1 \phi(g_j, g_{j+1} \cdots g_k) \omega( h, i) &=& \phi(g_{j+1} \cdots g_k) \omega( h, i) - \phi(g_j g_{j+1} \cdots g_k) \omega( h, i)
+ \phi(g_j) \omega( g_{j+1} \cdots g_k h, i) +\ldots \\
& & -\phi(g_j) \omega(g_{j+1} \cdots g_k, hi) + \phi(g_j) \omega(g_{j+1} \cdots g_k, h)
\end{eqnarray*}
Rearranging terms we see that
$$
\phi(g_j \cdots g_k) \omega(h,i) - \phi(g_{j+1} \cdots g_k) \omega(h,i) =  \phi(g_j) \omega(g_{j+1} \cdots g_k h,i) - \phi(g_j) \omega(g_{j+1} \cdots g_k ,hi) + \phi(g_j) \omega(g_{j+1} \cdots g_k ,h).
$$
Summing for $j=1, \ldots, k-1$ over both sides
$$
\phi(g_1 \cdots g_k) \omega( h, i) - \phi(g_k) \omega(h, i)  = \sum_{j=1}^{k-1} \Big( \phi(g_j) \omega( g_{j+1} \cdots g_{k} h, i) -  \phi(g_j) \omega(g_{j+1} \cdots g_{k} ,h i) +  \phi(g_j) \omega(g_{j+1} \cdots g_{k}, h) \Big).
$$
As $\omega$ was supposed to be symmetric we have that
 $\omega(1, h) = \omega(1,hi) = 0$ and hence
$$
\phi(g) \omega(h,i) = \zeta(g,h,i) - \zeta(g,1,hi) + \zeta(g, 1, h).
$$
\end{proof}

As in Subsection \ref{subsec:algebraic proof of free group trivial homology} define $\eta \in C^2(F, \R)$ by setting
$$
\eta \col (g,h) \mapsto \zeta(g,1,h)
$$
and note that
\begin{align*}
\delta^2 \eta(g,h,i) &= \eta(h,i) - \eta(gh,i) + \eta(g,hi) - \eta(g,h) \\
&= \zeta(h,1,i) - \zeta(gh,1,i) + \zeta(g,1,hi) - \zeta(g,1,h).
\end{align*}
Using Proposition \ref{prop:rewrite_quasicocycle} we see that
$$
\phi(g)\omega(h,i)  + \delta^2 \eta(g,h,i) 
$$
is equal to
$$
\zeta(g,h,i) + \zeta(h,1,i) - \zeta(gh,1,i).
$$

We will need the following properties of $\zeta$. 

\begin{prop} \label{prop:properties of zeta}
The function $\zeta$ defined as above has the following properties.
\begin{enumerate}
\item \label{item:splitting of zeta} If $u_1, u_2,v \in F$ are such that $u_1 u_2$ is reduced then $\zeta(u_1 u_2,1,v) - \zeta(u_1,u_2,v) - \zeta(u_2,1,v)$ 
is uniformly bounded.
\item \label{item:two sum zeta}
Let $u_1, u_2, u_3, u_4 \in F$ be elements such that $u_1 u_2$, $u_2 u_3$ and $u_2 u_4$ are reduced and $u_3$ and $u_4$ do not start with the same letter.
Then
$$
\zeta(u_1^{-1}, u_1 u_2 u_3, u_3^{-1} u_4)
+
\zeta(u_1, u_2 u_3, u_3^{-1} u_4)
$$
is uniformly bounded.
\item \label{item:continuity of zeta}
Let $u, v_1, v_2 \in F$ such that $v_1 u v_2$ is reduced. Then
\begin{enumerate}
\item \label{subitem:continuity1} $
\zeta(u, u^{-1} v_1^{-1}, v_1 u v_2) - \zeta(u, u^{-1}, u)
$
and
\item \label{subitem:continuity2} $
\zeta(u^{-1}, v_1^{-1}, v_1 u v_2) - \zeta(u^{-1}, 1, u)
$
\end{enumerate}
are uniformly bounded.
\end{enumerate}
\end{prop}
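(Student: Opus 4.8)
All three assertions are "bounded difference" statements about the cochain $\zeta(g,g',h)=\sum_{j=1}^k\phi(g_j)\,\omega(g_{j+1}\cdots g_k g',h)$, and the strategy in every case is to exploit the two genuine exactness properties of the summand already available to us: first, the identity $\delta^1\phi(g_j,g_{j+1}\cdots g_k)=0$ along a proper $\Delta$-sequence (Proposition \ref{prop: decompositions define quasimorphisms}), which lets us split $\zeta$ additively along reduced products into pieces of $\Delta$; second, the $\Delta$-continuity of $\omega$ (Definition \ref{def:continuous}), which says that replacing $\omega(a,b)$ by $\omega(a',b')$ costs only $s_{N}$ where $N=N_\Delta((a,b),(a',b'))$, and $\sum_j s_j=S_{\omega,\Delta}<\infty$. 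The boundedness constants will all come out as explicit polynomials in $\|\lambda\|_\infty$ (where $\phi=\phi_{\lambda,\Delta}$), $\|\omega\|_\infty$, $S_{\omega,\Delta}$ and the decomposition constant $R$ from Definition \ref{def:decomposition}(3).

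\textbf{Item (1): near-additivity of $\zeta$.} First I would prove the exact identity in the "good'' case: if $(\underline{a})\cdot(\underline{b})$ is a proper $\Delta$-sequence (i.e.\ $\Delta(u_1u_2)=\Delta(u_1)\cdot\Delta(u_2)$), then $\zeta(u_1u_2,1,v)=\zeta(u_1,u_2,v)+\zeta(u_2,1,v)$ holds on the nose, because the index set of the left sum partitions into the pieces of $u_1$ (giving $\zeta(u_1,u_2,v)$, with the tails correctly carrying the $u_2$ factor) and the pieces of $u_2$ (giving $\zeta(u_2,1,v)$). For the general case where $u_1u_2$ is reduced but $\Delta(u_1u_2)\ne\Delta(u_1)\cdot\Delta(u_2)$: write $\Delta(u_1u_2)$, $\Delta(u_1)$, $\Delta(u_2)$ in terms of the $c$- and $r$-parts of the $\Delta$-triangle of $(u_1,u_2)$ as in Definition \ref{def:decomposition}(3). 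The pieces that differ between the three decompositions are only the $O(R)$ many $r$-pieces near the cancellation locus; each contributes a term bounded by $\|\lambda\|_\infty\|\omega\|_\infty$ in absolute value, so the total discrepancy is bounded by a constant multiple of $R\,\|\lambda\|_\infty\|\omega\|_\infty$. This is the model computation; (2) and (3) are variations.

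\textbf{Items (2) and (3): continuity-type bounds.} Here the two $\zeta$-terms being added have $\phi$-arguments that are mutually inverse "common paths'' ($u_1^{-1}$ vs.\ $u_1$, or $u^{-1}$ vs.\ $u$), so the $\phi$-values telescope against each other piece-by-piece via symmetry $\lambda(p^{-1})=-\lambda(p)$, exactly as in Claim \ref{claim: ordinary cohomology coboundary}. After pairing up the $j$-th piece $\dtt_j$ of $u_1$ (or $u$) with its inverse $\dtt_j^{-1}$, the $\phi$-weights $\lambda(\dtt_j)$ and $-\lambda(\dtt_j)$ cancel \emph{up to} the difference of the two $\omega$-values attached to them; that difference is controlled by $\Delta$-continuity. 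The key point to check is that the two $\omega$-arguments in each paired term define $\Delta$-triangles that agree to depth growing with the distance of $\dtt_j$ from the "centre'' of the configuration — so $N_\Delta\to\infty$ as $j$ runs toward the far end, and the contributions are dominated by $\sum_j s_j=S_{\omega,\Delta}<\infty$. For (2) the hypothesis that $u_3,u_4$ start with different letters guarantees the relevant common sequence is exactly $u_3$ with nothing extra, pinning down the $N_\Delta$ estimate; for (3) one also uses the hypothesis $v_1uv_2$ reduced together with infix-closedness to identify the decompositions of the subwords.

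\textbf{Main obstacle.} The routine parts are the index bookkeeping of the $\zeta$-sums; the delicate part is item (3) — and (2) — where one must verify that the $N_\Delta$ value between the two $\omega$-arguments of the $j$-th paired term really does grow linearly (or at least without bound) in the "co-depth'' of the $j$-th piece, so that summing $s_{N_\Delta}$ over all $j$ stays bounded by $S_{\omega,\Delta}$. Concretely this means: after fixing the $c$-parts and $r$-parts of both $\Delta$-triangles, show that their $r$-parts coincide and their $c$-parts agree to depth at least (roughly) $j$ or $k-j$. Once that geometric claim is pinned down the bound is immediate; establishing it carefully, using infix-closedness (Definition \ref{def:decomposition}(2)) and the structure of common $2$-paths from Subsection \ref{subsec:triangles and quadrangles}, is where the real work lies.
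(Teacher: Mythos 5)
Your proposal is correct and follows essentially the same route as the paper: item (1) via the $c$/$r$-part decomposition of the $\Delta$-triangle of $(u_1,u_2)$ (with $(\underline{c}_2)$ empty since $u_1u_2$ is reduced, so the discrepancy lives entirely in the $O(R)$ $r$-pieces and is bounded by a constant times $R\,\|\lambda\|_\infty\|\omega\|_\infty$), and items (2)--(3) by reducing to sums of $\omega$-differences whose $N_\Delta$-values grow with $j$, so that $\sum_j s_{N_\Delta}\leq 2 S_{\omega,\Delta}$; the paper packages the geometric step you flag as the "main obstacle" in two explicit claims proved from infix-closedness (Claim \ref{claim:u v no cancellation delta} and Claim \ref{claim:other part}). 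One small imprecision: in item (3) the two $\zeta$-terms carry the \emph{same} first argument ($u$ in (3a), $u^{-1}$ in (3b)), so there is no cancellation of $\phi$-weights via $\lambda(p^{-1})=-\lambda(p)$ as in item (2); instead each $\phi(u_j)$ factors out of the difference of the two $\omega$-values directly --- this does not change the strategy, but the "pairing inverse pieces" picture applies only to item (2).
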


\begin{proof}
In the proof of items (\ref{item:splitting of zeta})-(\ref{item:continuity of zeta}) we will frequently use the following claim:
\begin{claim} \label{claim:u v no cancellation delta}
Let $u,v_1, v_2 \in F$ be such that $v_1 u v_2$ is reduced, let $\Delta(u) = (u_1, \ldots, u_n)$ and let $R$ be as in Definition \ref{def:decomposition}. Then, there are sequences $(\underline{v}'_1), (\underline{v}'_2)$ such that
\begin{enumerate}
\item \label{item:u right} for every $1 \leq j \leq n-R$ we have that $\Delta(u_j \cdots u_n v_2) = (u_j, \ldots, u_{n-R}) \cdot (\underline{v}_2')$ and
\item \label{item:u left} for every $R \leq j \leq n$ we have that $\Delta(v_1 \cdot u_1 \cdots u_j) = (\underline{v}'_1) \cdot (u_R, \ldots, u_j)$.
\end{enumerate} 
\end{claim}

\begin{proof}
For (\ref{item:u right}) let $(\underline{c}_1), (\underline{c}_2),(\underline{c}_3)$ be the $c$-part of the $\Delta$-triangle of $(u, v_2)$ and let  $(\underline{r}_1), (\underline{r}_2),(\underline{r}_3)$ be the $r$-part of the $\Delta$-triangle of $(u, v_2)$. Then, as $u v_2$ is reduced we see that $(\underline{c}_2)=\emptyset$. Hence
$\Delta(u) = (\underline{c}_1)^{-1} \cdot (\underline{r}_1)$ and $\Delta(uv) = (\underline{c}_1)^{-1} \cdot (\underline{r}_3) \cdot (\underline{c}_3)$. Moreover, observe that the length of $(\underline{r}_1)$ is bounded by $R$. Hence all of $(u_1, \ldots, u_{n-R})$ lie in $(\underline{c}_1)^{-1}$. Comparing $\Delta(uv)$ with $\Delta(u)$ and using that decompositions are infix closed (see Definition \ref{def:decomposition}) yields (\ref{item:u right}). Item (\ref{item:u left}) can be deduced by the same argument. 
\end{proof}

We first show (\ref{item:splitting of zeta}) of Proposition \ref{prop:properties of zeta}. Let $u_1, u_2 \in F$ be such that $u_1 u_2$ is reduced. 
Let the $c$-part of the $\Delta$-triangle of $(u_1, u_2)$ be
$(\underline{c}_1), (\underline{c}_2),(\underline{c}_3)$ and let the $r$-part of the $\Delta$-triangle of $(u_1,u_2)$ be
$(\underline{r}_1), (\underline{r}_2),(\underline{r}_3)$.
As $u_1 u_2$ is reduced, $(\underline{c}_2)$ has to be empty.
Hence 
\begin{itemize}
\item $\Delta(u_1) = ((\underline{c}_1)^{-1} \cdot (\underline{r}_1))$,
\item $\Delta(u_2) = ((\underline{r}_2) \cdot (\underline{c}_3))$ and
\item $\Delta(u_1 u_2) = ((\underline{c}_1)^{-1} \cdot (\underline{r}_3)^{-1} \cdot (\underline{c}_3))$.
\end{itemize} 
Suppose that $(\underline{c}_i)=(c_{i,1}, \ldots c_{i,n_i})$ and that $(\underline{r}_i)=(r_{i,1}, \ldots r_{i,m_i})$ for $i=1,2,3$. Then
\begin{eqnarray*}
\zeta(u_1 u_2,1,v) &=& \sum_{j=1}^{n_1} \phi(c_{1,j}^{-1}) \omega(c_{1,j-1}^{-1} \cdots c_{1,1}^{-1} \bar{r}_3^{-1} \bar{c}_3, v) +
 \sum_{j=1}^{m_3} \phi(r_{3,j}^{-1}) \omega(r_{3,j-1}^{-1} \cdots r_{3,1}^{-1} \bar{c}_3, v) + 
 \\
 & & + \sum_{j=1}^{n_3} \phi(c_{3,j}) \omega(c_{3,j+1}\cdots c_{3,n_3} , v) 
\\
\zeta(u_1,u_2,v) &=& \sum_{j=1}^{n_1} \phi(c_{1,j}^{-1}) \omega(c_{1,j-1}^{-1} \cdots c_{1,1}^{-1} \bar{r}_1 u_2, v) +
\sum_{j=1}^{m_1} \phi(r_{1,j}) \omega(r_{1,j+1}\cdots r_{1,n_1} u_2 , v)
 \\
 \zeta(u_2,1,v) &=& \sum_{j=1}^{m_2} \phi(r_{2,j}) \omega(r_{2,j+1}\cdots r_{2,m_2} \bar{c}_3 , v)
 + \sum_{j=1}^{n_3} \phi(c_{3,j}) \omega(c_{3,j+1}\cdots c_{3,n_3} , v)
\end{eqnarray*}
and hence
\begin{eqnarray*}
\zeta(u_1 u_2,1,v) - \zeta(u_1,u_2,v) - \zeta(u_2,1,v) &=&  \sum_{j=1}^{m_3} \phi(r_{3,j}^{-1}) \omega(r_{3,j-1}^{-1} \cdots r_{3,1}^{-1} \bar{c}_3, v)  \\
& & - \sum_{j=1}^{m_1} \phi(r_{1,j}) \omega(r_{1,j+1}\cdots r_{1,n_1} u_2 , v)  \\
& & -\sum_{j=1}^{m_2} \phi(r_{2,j}) \omega(r_{2,j+1}\cdots r_{2,m_2} \bar{c}_3 , v)
\end{eqnarray*}
which is indeed uniformly bounded, as $m_1,m_2,m_3 \leq R$ (see Definition \ref{def:decomposition}). Since $\phi$ is $\Delta$-decomposable, $\phi$ is uniformly bounded on pieces and as $\omega$ is a bounded function.

To see (\ref{item:two sum zeta}), let $u_1, u_2, u_3, u_4$ be as in the proposition and suppose that $\Delta(u_1) = (u_{1,1}, \ldots, u_{1,n})$. 

\begin{claim} \label{claim:other part}
We have that the $r$-part of the $\Delta$-triangles of $(u_{1,j} \cdots u_{1,n} u_2 u_3, u_3^{-1} u_4 )$ are the same for any $j \leq n-R$ and that the $c$-part of $(u_{1,j} \cdots u_{1,n} u_2 u_3, u_3^{-1} u_4 )$ is $(\underline{c}'_1) \cdot (u_{1,n}^{-1}, \cdots, u_{1,j}^{-1}), (\underline{c}'_2), (\underline{c}'_3)$ for appropriate sequences $(\underline{c}'_1), (\underline{c}'_2), (\underline{c}'_3)$. In particular there is a $C \in \N$ such that 
$$
N_\Delta ((u_{1,j} \cdots u_{1,n} u_2 u_3, u_3^{-1} u_4 ), (u_{1,j+1} \cdots u_{1,n} u_2 u_3, u_3^{-1} u_4 )) = j + C
$$
for all $j \leq n-R$.
\end{claim}
\begin{proof}
It follows by comparing the sequences 
$\Delta(u_{1,j} \cdots u_{1,n} u_2 u_3)$ and 
$\Delta(u_{1,j} \cdots u_{1,n} u_2 u_4)$ using Claim \ref{claim:u v no cancellation delta}.
\end{proof}

For (\ref{item:two sum zeta}) of Proposition \ref{prop:properties of zeta}, we calculate
\begin{eqnarray*}
\zeta(u_1^{-1}, u_1 u_2 u_3, u_3^{-1} u_4)+
\zeta(u_1, u_2 u_3, u_3^{-1} u_4) &=& \sum_{j=1}^n \phi(u_{1,j}^{-1}) \omega(u_{1,j} \cdots u_{1,n} u_2 u_3, u_3^{-1} u_4) \ldots \\
& & + \sum_{j=1}^n \phi(u_{1,j}) \omega(u_{1,j+1} \cdots u_{1,n} u_2 u_3, u_3^{-1} u_4) \\
&=& \sum_{j=1}^n \phi(u_{1,j}) \Big( \omega(u_{1,j+1} \cdots u_{1,n} u_2 u_3, u_3^{-1} u_4) \ldots  \\
& &- \omega(u_{1,j} \cdots u_{1,n} u_2 u_3, u_3^{-1} u_4) \Big).
\end{eqnarray*}
Hence we conclude that $\zeta(u_1^{-1}, u_1 u_2 u_3, u_3^{-1} u_4)+
\zeta(u_1, u_2 u_3, u_3^{-1} u_4)$ is uniformly close to
$$
\sum_{j=1}^{n-R} \phi(u_{1,j}) \Big( \omega(u_{1,j+1} \cdots u_{1,n} u_2 u_3, u_3^{-1} u_4) - \omega(u_{1,j} \cdots u_{1,n} u_2 u_3, u_3^{-1} u_4) \Big)
$$
as $R$ just depends on $\Delta$ and $\phi$ is uniformly bounded on pieces.
Now let $(s_j)_{j \in \N}$ be the sequence in Definition \ref{def:continuous}.
 By Claim \ref{claim:other part},
$$
|\omega(u_{1,j+1} \cdots u_{1,n} u_2 u_3, u_3^{-1} u_4) - \omega(u_{1,j} \cdots u_{1,n} u_2 u_3, u_3^{-1} u_4)| < s_{n+C}.
$$
and hence 
$$
\sum_{j=1}^{n-R} |\omega(u_{1,j+1} \cdots u_{1,n} u_2 u_3, u_3^{-1} u_4) - \omega(u_{1,j} \cdots u_{1,n} u_2 u_3, u_3^{-1} u_4)| < S_{\omega, \Delta}.
$$
Putting those estimations together we see that 
$\zeta(u_1^{-1}, u_1 u_2 u_3, u_3^{-1} u_4)+
\zeta(u_1, u_2 u_3, u_3^{-1} u_4)$ is indeed uniformly bounded.

To see (\ref{subitem:continuity1}), let $u, v_1, v_2$ be as in the proposition and suppose that $\Delta(u) = (u_1, \ldots, u_n)$. By Claim \ref{claim:u v no cancellation delta}, we see that for $n-R \leq j$ and $R \leq j$ the $r$-part of the $\Delta$-triangle of $(u_j^{-1} \cdots u_1^{-1} v_1^{-1}, v_1 u v_2)$ is trivial and that there are sequences $(\underline{v}'_1), (\underline{v}'_2)$ such that the $c$-part of the $\Delta$-triangle is
$\emptyset, (u_j^{-1}, \ldots, u_1^{-1}) \cdot (\underline{v}'_1), (u_{j-1}, \ldots u_n) \cdot (\underline{v}'_2)$.
Hence there are integers $C_1,C_2$ such that
$$
N_\Delta \Big((u_j^{-1} \cdots u_1^{-1} v_1^{-1}, v_1 u v_2), (u_j^{-1} \cdots u_1^{-1},u )\Big) \geq \min \{ j-R+C_1, n-j-R+C_2 \}
$$
and hence
$$
\sum_{j=R}^{n-R} |\omega(u_j^{-1} \cdots u_1^{-1} v_1^{-1}, v_1 u v_2)- (u_j^{-1} \cdots u_1^{-1},u )| \leq 2 S_{\omega, \Delta}.
$$
Finally, observe that
$$
\zeta(u, u^{-1} v_1^{-1}, v_1 u v_2) - \zeta(u, u^{-1}, u) = \sum_{j=1}^n \phi(u_j) \Big(\omega(u_j^{-1} \cdots u_1^{-1} v_1^{-1}, v_1 u v_2)- (u_j^{-1} \cdots u_1^{-1},u ) \Big)
$$
and is hence uniformly close to
$$
\sum_{j=R}^{n-R} \phi(u_j) \Big(\omega(u_j^{-1} \cdots u_1^{-1} v_1^{-1}, v_1 u v_2)- (u_j^{-1} \cdots u_1^{-1},u ) \Big)
$$
With the above estimation we hence see that $\zeta(u, u^{-1} v_1^{-1}, v_1 u v_2) - \zeta(u, u^{-1}, u)$ may be uniformly bounded.
The proof of item (\ref{subitem:continuity2}) is analogous to the proof for item (\ref{subitem:continuity1}).
\end{proof}

Analogously to Claim \ref{claim:ordinary cohomology zeta two piece} we show:
\begin{prop}
The term $\phi(g)\omega(h,i) + \delta^2 \eta(g,h,i)$ is uniformly close to
$\zeta(d, h, i) + \zeta(d^{-1}, d h, i)$ where $d$ is the common $2$-path of $(g,h)$.
\end{prop}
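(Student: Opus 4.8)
The plan is to mirror the computation of Claim~\ref{claim:ordinary cohomology zeta two piece} almost verbatim, replacing each use of the exact additivity identity (\ref{equ:zeta classic additive}) by its bounded-error analogue. Recall that we have already shown
$$
\phi(g)\omega(h,i) + \delta^2\eta(g,h,i) = \zeta(g,h,i) + \zeta(h,1,i) - \zeta(gh,1,i),
$$
so it suffices to prove that the right-hand side is uniformly close to $\zeta(d,h,i) + \zeta(d^{-1},dh,i)$.

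First I would record a ``general splitting'' of $\zeta$: whenever $u_1,u_2,g',v \in F$ with $u_1u_2$ reduced, the difference $\zeta(u_1u_2,g',v) - \zeta(u_1,u_2 g',v) - \zeta(u_2,g',v)$ is uniformly bounded. This is proved by exactly the argument given for Proposition~\ref{prop:properties of zeta}(\ref{item:splitting of zeta}): since $u_1 u_2$ is reduced, the common $2$-path of $(u_1,u_2)$ is empty, so $\Delta(u_1) = (\underline{c}_1^{-1})\cdot(\underline{r}_1)$, $\Delta(u_2) = (\underline{r}_2)\cdot(\underline{c}_3)$ and $\Delta(u_1 u_2) = (\underline{c}_1^{-1})\cdot(\underline{r}_3^{-1})\cdot(\underline{c}_3)$; using $\bar{r}_3^{-1}\bar{c}_3 = \bar{r}_1 u_2$ one checks that the contributions coming from $(\underline{c}_1)$ and from $(\underline{c}_3)$ cancel term by term against the corresponding contributions in $\zeta(u_1 u_2,g',v)$, and what remains are three sums over the $r$-parts $(\underline{r}_1),(\underline{r}_2),(\underline{r}_3)$, each of length $\le R$ and each summand of the form $\phi(\cdot)\,\omega(\cdot,v)$, hence uniformly bounded because $\phi$ is bounded on pieces and $\omega$ is a bounded function.

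With this in hand, write $g = t_1^{-1} d$ and $h = d^{-1} t_2$ as reduced words, where $d$ is the common $2$-path of $(g,h)$; then $gh = t_1^{-1} t_2$ is reduced and $dh = t_2$. Applying the general splitting three times yields, up to a uniform error,
$$
\zeta(g,h,i) \approx \zeta(t_1^{-1},t_2,i) + \zeta(d,h,i),\quad \zeta(h,1,i) \approx \zeta(d^{-1},t_2,i) + \zeta(t_2,1,i),\quad \zeta(gh,1,i) \approx \zeta(t_1^{-1},t_2,i) + \zeta(t_2,1,i).
$$
Substituting these into $\zeta(g,h,i) + \zeta(h,1,i) - \zeta(gh,1,i)$, the terms $\zeta(t_1^{-1},t_2,i)$ and $\zeta(t_2,1,i)$ cancel and one is left with $\zeta(d,h,i) + \zeta(d^{-1},t_2,i) = \zeta(d,h,i) + \zeta(d^{-1},dh,i)$, up to the sum of the three uniform errors, which is exactly the claim. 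The only step that is not immediately quotable is the passage from middle argument $1$ (as in Proposition~\ref{prop:properties of zeta}(\ref{item:splitting of zeta})) to an arbitrary middle argument $g'$ — needed because $\zeta(g,h,i)$ has middle argument $h$ — but this costs nothing, as the same proof applies; so there is no real obstacle here, only the bookkeeping already carried out in Claim~\ref{claim:ordinary cohomology zeta two piece}.
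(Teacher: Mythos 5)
Your proof is correct and essentially reproduces the paper's argument verbatim: you reduce to $\zeta(g,h,i) + \zeta(h,1,i) - \zeta(gh,1,i)$, split each term across the tripod decomposition $g = t_1^{-1}d$, $h = d^{-1}t_2$, and cancel. The one place where you go beyond quoting the paper is precisely the right place to be careful: Proposition~\ref{prop:properties of zeta}(\ref{item:splitting of zeta}) as stated only covers the case of middle argument $1$, namely $\zeta(u_1u_2,1,v) \approx \zeta(u_1,u_2,v)+\zeta(u_2,1,v)$, whereas the first of the three splittings (the one for $\zeta(g,h,i)=\zeta(t_1^{-1}d,h,i)$) needs middle argument $h$. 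You correctly observe that the stated version cannot simply be bootstrapped — in the application $u_2 g' = d\cdot d^{-1}t_2 = t_2$ is not a reduced product, so chaining three instances of the middle-argument-$1$ version breaks down — and that instead one must rerun the proof of (\ref{item:splitting of zeta}) with the right-hand factor $g'$ carried along, which works because the cancellation of the $(\underline{c}_1)$- and $(\underline{c}_3)$-contributions happens term-by-term regardless of $g'$. The paper itself uses this generalized form silently; your proof makes the needed generalization explicit, which is an improvement in rigor rather than a deviation.
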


\begin{proof}
Let $g,h,i \in F$. Furthermore write $g = t_1^{-1} d$ and $h = d^{-1} t_2$ where $d$ is the common $2$-piece of $(g,h)$. We know that
$
\phi(g)\omega(h,i)  + \delta^2 \eta(g,h,i) 
$
is equal to
$$
\zeta(g,h,i) + \zeta(h,1,i) - \zeta(gh,1,i).
$$
Using Proposition  \ref{prop:properties of zeta}, (\ref{item:splitting of zeta}) we see that
\begin{itemize}
\item $\zeta(g,h,i)$ is uniformly close to $\zeta(t_1^{-1}, t_2, i)+\zeta(d,d^{-1} t_2, i)$,
\item $\zeta(h,1,i)$ is uniformly close to $\zeta(d^{-1}, t_2, i)+\zeta(t_2,1,i)$ and
\item $\zeta(gh,1,i)$ is uniformly close to $\zeta(t_1^{-1},t_2,i) + \zeta(t_2,1,i)$.
\end{itemize}
Combining these estimates we see that $\phi(g)\omega(h,i)  + \delta^2 \eta(g,h,i)$ is uniformly close to
$\zeta(d, d^{-1} t_2, i) + \zeta(d^{-1}, t_2, i)$.
\end{proof}

\begin{prop} \label{prop: tau plus delta eta unif close to theta}
We have that $\phi(g) \omega( h,i) + \delta^2 \eta(g,h,i)$ is uniformly close to 
$$
\zeta(c, c^{-1}, c) + \zeta(c^{-1}, 1, c)
$$
where $c$ is the common $3$-path of $(g,h,i)$.
\end{prop}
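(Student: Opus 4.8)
The plan is to combine the preceding proposition with the trichotomy for quadripods from Subsection~\ref{subsec:triangles and quadrangles}. By the preceding proposition, $\phi(g)\omega(h,i)+\delta^2\eta(g,h,i)$ is uniformly close to $\zeta(d,h,i)+\zeta(d^{-1},dh,i)$ with $d$ the common $2$-path of $(g,h)$; since being uniformly close is transitive, it suffices to prove that $\zeta(d,h,i)+\zeta(d^{-1},dh,i)$ is uniformly close to $\zeta(c,c^{-1},c)+\zeta(c^{-1},1,c)$, where $c$ is the common $3$-path of $(g,h,i)$. I would first note that $\Delta(1)$ is the empty sequence, so $\zeta(1,x,y)=0$ for all $x,y\in F$; in particular $\zeta(c,c^{-1},c)+\zeta(c^{-1},1,c)=0$ whenever $c$ is trivial. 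Then I would treat the three cases of Figure~\ref{fig:Figure4} in turn.

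In cases~(1) and~(2) the common $3$-path $c$ is trivial, so the target is $0$ and one must show $\zeta(d,h,i)+\zeta(d^{-1},dh,i)$ is uniformly bounded. In case~(1), where $g=t_1t_2$, $h=t_2^{-1}t_3t_4$, $i=t_4^{-1}t_5$, one has $d=t_2$ and $dh=t_3t_4$, so the quantity equals $\zeta(t_2,t_2^{-1}t_3t_4,t_4^{-1}t_5)+\zeta(t_2^{-1},t_3t_4,t_4^{-1}t_5)$; this is an instance of Proposition~\ref{prop:properties of zeta}(\ref{item:two sum zeta}) with $(u_1,u_2,u_3,u_4)=(t_2^{-1},t_3,t_4,t_5)$, hence uniformly bounded. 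Case~(2), where $g=t_1t_2t_3$, $h=t_3^{-1}t_4$, $i=t_4^{-1}t_2^{-1}t_5$, gives $d=t_3$, $dh=t_4$ and the same conclusion with $(u_1,u_2,u_3,u_4)=(t_3^{-1},1,t_4,t_2^{-1}t_5)$. In both cases the hypotheses of Proposition~\ref{prop:properties of zeta}(\ref{item:two sum zeta}) --- the relevant products being reduced and $u_3,u_4$ not beginning with the same letter --- follow because the $t_j$ are the edges of the quadripod and so leave its branch points in distinct directions.

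Case~(3), where $g=t_1^{-1}ct_2$, $h=t_2^{-1}c^{-1}t_3$, $i=t_3^{-1}ct_4$, is the main case; here $d=ct_2$ and $dh=t_3$. I would first split off the outer pieces using Proposition~\ref{prop:properties of zeta}(\ref{item:splitting of zeta}) (applied, exactly as in the proof of the preceding proposition, with a nontrivial third argument): since $c\cdot t_2$ and $t_2^{-1}\cdot c^{-1}$ are reduced and $t_2h=c^{-1}t_3$, the term $\zeta(ct_2,h,i)$ is uniformly close to $\zeta(c,c^{-1}t_3,i)+\zeta(t_2,h,i)$ and $\zeta(t_2^{-1}c^{-1},t_3,i)$ is uniformly close to $\zeta(t_2^{-1},c^{-1}t_3,i)+\zeta(c^{-1},t_3,i)$. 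Regrouping, $\zeta(d,h,i)+\zeta(d^{-1},dh,i)$ is uniformly close to
\[
\underbrace{\zeta(c,c^{-1}t_3,i)+\zeta(c^{-1},t_3,i)}_{\mathrm{(I)}}\ +\ \underbrace{\zeta(t_2,h,i)+\zeta(t_2^{-1},c^{-1}t_3,i)}_{\mathrm{(II)}}.
\]
For $\mathrm{(I)}$ I would apply the continuity estimates of Proposition~\ref{prop:properties of zeta}(\ref{item:continuity of zeta}) with $u=c$, $v_1=t_3^{-1}$, $v_2=t_4$, so that $v_1uv_2=t_3^{-1}ct_4=i$ is reduced: part~(\ref{subitem:continuity1}) gives $\zeta(c,c^{-1}t_3,i)$ uniformly close to $\zeta(c,c^{-1},c)$ and part~(\ref{subitem:continuity2}) gives $\zeta(c^{-1},t_3,i)$ uniformly close to $\zeta(c^{-1},1,c)$, so $\mathrm{(I)}$ is uniformly close to the target $\zeta(c,c^{-1},c)+\zeta(c^{-1},1,c)$. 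For $\mathrm{(II)}$ I would recognise $\zeta(t_2,h,i)+\zeta(t_2^{-1},c^{-1}t_3,i)$ as $\zeta(u_1^{-1},u_1u_2u_3,u_3^{-1}u_4)+\zeta(u_1,u_2u_3,u_3^{-1}u_4)$ for $u_1=t_2^{-1}$, $u_2=1$, $u_3=c^{-1}t_3$, $u_4=t_4$ --- the point being to take $u_3$ to be the \emph{full} product $c^{-1}t_3$, so that $u_2u_4=t_4$ is reduced, whereas $u_3=c^{-1}t_3$ and $u_4=t_4$ do not start with the same letter since $ct_4$ is a reduced subword of $i$ --- so that $\mathrm{(II)}$ is uniformly bounded by Proposition~\ref{prop:properties of zeta}(\ref{item:two sum zeta}). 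Combining the estimates for $\mathrm{(I)}$ and $\mathrm{(II)}$ finishes the proof; the bounds are uniform in $(g,h,i)$ because all constants produced by Proposition~\ref{prop:properties of zeta} depend only on $\Delta$, on $\|\phi\|_\infty$ on pieces and on $S_{\omega,\Delta}$, and the case~(3) argument remains valid when $c$ is trivial, absorbing the remaining degenerate sub-configurations.

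The step I expect to be the main obstacle is the regrouping in case~(3): finding the split of $\zeta(ct_2,h,i)+\zeta(t_2^{-1}c^{-1},t_3,i)$ that routes the common-$3$-path $c$ into the continuity estimate $\mathrm{(I)}$ while the segment $t_2$ collapses into the two-sum estimate $\mathrm{(II)}$, and in particular choosing the quadruple $(u_1,u_2,u_3,u_4)$ in the application of Proposition~\ref{prop:properties of zeta}(\ref{item:two sum zeta}) to $\mathrm{(II)}$ so that no cancellation occurs in $u_2u_4$. The remaining verifications --- reducedness and distinct-first-letter conditions in each case --- are routine once one reads them off from the tripod/quadripod picture of Subsection~\ref{subsec:triangles and quadrangles}.
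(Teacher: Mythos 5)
Your argument matches the paper's proof exactly: the same trichotomy from Subsection~\ref{subsec:triangles and quadrangles}, the same applications of the splitting estimate, the two-sum estimate, and the continuity estimates of Proposition~\ref{prop:properties of zeta}, in the same positions and with the same roles for $(I)$ and $(II)$ in Case~(3). The only cosmetic difference is your choice $(u_1,u_2,u_3,u_4)=(t_2^{-1},1,c^{-1}t_3,t_4)$ in part~$(II)$ of Case~(3) versus the paper's $(t_2^{-1},c^{-1},t_3,ct_4)$; both yield the identical $\zeta$-terms in Proposition~\ref{prop:properties of zeta}(\ref{item:two sum zeta}), so the proof is correct and essentially verbatim the paper's.
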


\begin{proof}
We consider the three different cases described in Subsection \ref{subsec:triangles and quadrangles} of how three elements $g, h, i \in F$ can be aligned.
\begin{enumerate}
\item[Case A]: There are elements $t_1, \ldots, t_5$ such that
 $g = t_1 t_2$, $h = t_2^{-1} t_3 t_4$, $i = t_4^{-1} t_5$ as reduced words.
Then the common $2$-path of $(g,h)$ is $t_2$. Hence $\phi(g)\omega(h,i)  + \delta^2 \eta(g,h,i)$ is uniformly close to
$$
\zeta(t_2, t_2^{-1} t_3 t_4, t_4^{-1} t_5) + 
\zeta(t_2^{-1}, t_3 t_4, t_4^{-1} t_5).
$$
Using Proposition \ref{prop:properties of zeta}, (\ref{item:two sum zeta}) for $u_1=t_2^{-1}, u_2 = t_3, u_3 = t_4, u_4 = t_5$ we see that in this case $\phi(g)\omega(h,i)  + \delta^2 \eta(g,h,i)$ is uniformly bounded.
 \item[Case B]: There are elements $t_1, \ldots, t_5$ such that
 $g = t_1 t_2 t_3$, $h = t_3^{-1} t_4$, $i = t_4^{-1} t_2^{-1} t_5$ as reduced words.
Then the common $2$-path of $(g,h)$ is $t_3$. 
 Hence $\phi(g)\omega(h,i)  + \delta^2 \eta(g,h,i)$ is uniformly close to
$$
\zeta(t_3, t_3^{-1} t_4, t_4^{-1} t_2^{-1} t_5)
+
\zeta( t_3^{-1} ,t_4, t_4^{-1} t_2^{-1} t_5).
$$
Using Proposition \ref{prop:properties of zeta}, (\ref{item:two sum zeta}) for $u_1 = t_3^{-1}, u_2 = \emptyset, u_3=t_4,  u_4= t_2^{-1} t_5$ we see that in this case, $\phi(g)\omega(h,i)  + \delta^2 \eta(g,h,i)$ is uniformly bounded.
 \item[Case C]: There are elements $t_1, \ldots, t_4$ and $c$ such that
$g = t_1^{-1} c t_2$, $h = t_2^{-1} c^{-1} t_3$, $i = t_3^{-1} c t_4$ as reduced words.
Then the common $2$-path of $(g,h)$ is $c t_2$.
Hence $\phi(g)\omega(h,i)  + \delta^2 \eta(g,h,i)$ is uniformly close to
$$
\zeta(c t_2, t_2^{-1} c^{-1} t_3, t_3^{-1} c t_4)
+
\zeta(t_2^{-1} c^{-1} ,t_3, t_3^{-1} c t_4)
$$
Using Proposition \ref{prop:properties of zeta} (\ref{item:splitting of zeta}) we see that 
\begin{itemize}
\item $\zeta(c t_2, t_2^{-1} c^{-1} t_3, t_3^{-1} c t_4)$ is uniformly close to
$\zeta(c, c^{-1} t_3, t_3^{-1} c t_4) + \zeta(t_2, t_2^{-1} c^{-1} t_3, t_3^{-1} c t_4)$ and
\item  $\zeta(t_2^{-1} c^{-1} ,t_3, t_3^{-1} c t_4)$ is uniformly close to
$\zeta(t_2^{-1}, c^{-1} t_3, t_3^{-1} c t_4) + \zeta( c^{-1} ,t_3, t_3^{-1} c t_4)$.
\end{itemize}
Hence $\phi(g)\omega(h,i)  + \delta^2 \eta(g,h,i)$ is uniformly close to
\begin{equation*}
\zeta(c, c^{-1} t_3, t_3^{-1} c t_4) + \zeta( c^{-1} ,t_3, t_3^{-1} c t_4) + \Big( \zeta(t_2, t_2^{-1} c^{-1} t_3, t_3^{-1} c t_4) + \zeta(t_2^{-1}, c^{-1} t_3, t_3^{-1} c t_4) \Big).
\end{equation*}
Using Proposition \ref{prop:properties of zeta} (\ref{item:two sum zeta}) for $u_1 = t_2^{-1}, u_2 = c^{-1}, u_3 = t_3, u_4 = c t_4$ we see that
$\Big( \zeta(t_2, t_2^{-1} c^{-1} t_3, t_3^{-1} c t_4) + \zeta(t_2^{-1}, c^{-1} t_3, t_3^{-1} c t_4) \Big)$
is uniformly bounded.
Using item (\ref{subitem:continuity1}) of the same proposition for $u=c, v_1=t_3^{-1}, v_2 = t_4$ we see that 
$\zeta(c, c^{-1} t_3, t_3^{-1} c t_4)$ is uniformly close to $\zeta(c, c^{-1},  c )$ and by item (\ref{subitem:continuity2}) again for $u=c, v_1 = t_3^{-1}, v_2 = t_4$ we see that
$\zeta( c^{-1} ,t_3, t_3^{-1} c t_4)$  is uniformly close to $\zeta(c^{-1}, 1, c)$.
Putting the above estimations together we see that
$\phi(g)\omega(h,i)  + \delta^2 \eta(g,h,i)$ is uniformly close to $\zeta(c, c^{-1}, c) + \zeta(c^{-1}, 1, c)$.
\end{enumerate}
\end{proof}

\begin{prop} \label{prop:theta symmetric quasimorph}
The map $\theta \col F \to \R$ defined by setting 
$$
\theta \col g \mapsto \zeta(g, g^{-1}, g) + \zeta(g^{-1}, 1, g)
$$
is a symmetric quasimorphism.
\end{prop}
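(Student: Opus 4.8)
The plan is to reduce everything to a single auxiliary function. Write $\Phi\col F\to\R$, $\Phi\col g\mapsto\zeta(g,1,g^{-1})$ (equivalently $\Phi(g)=\eta(g,g^{-1})$ with the notation preceding Theorem \ref{thm:technical}). First I would record the identity $\theta(g)=\Phi(g^{-1})-\Phi(g)$. Indeed, applying Proposition \ref{prop:rewrite_quasicocycle} with $(g,h,i)=(g,g^{-1},g)$ gives $\phi(g)\,\omega(g^{-1},g)=\zeta(g,g^{-1},g)-\zeta(g,1,1)+\zeta(g,1,g^{-1})$; since $\omega$ is symmetric (it is the coboundary of a symmetric quasimorphism) we have $\omega(g^{-1},g)=0$ and $\omega(\,\cdot\,,1)=0$, whence $\zeta(g,1,1)=0$ and therefore $\zeta(g,g^{-1},g)=-\zeta(g,1,g^{-1})=-\Phi(g)$. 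Combined with $\zeta(g^{-1},1,g)=\Phi(g^{-1})$ this gives $\theta(g)=\Phi(g^{-1})-\Phi(g)$, so that $\theta(g^{-1})=-\theta(g)$ is immediate. Hence $\theta$ is symmetric and it only remains to prove that it is a quasimorphism.

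The heart of the argument is the claim that $\Phi$ is \emph{almost additive on reduced products}: there is a constant $K$ with $|\Phi(uv)-\Phi(u)-\Phi(v)|\le K$ whenever $uv$ is reduced (no cancellation). Since $(uv)^{-1}=v^{-1}u^{-1}$, applying Proposition \ref{prop:properties of zeta}(\ref{item:splitting of zeta}) to $\zeta(uv,1,v^{-1}u^{-1})$ shows that $\Phi(uv)=\zeta(u,v,v^{-1}u^{-1})+\zeta(v,1,v^{-1}u^{-1})$ up to a uniformly bounded error. By Proposition \ref{prop:properties of zeta}(\ref{subitem:continuity2}) (with $v^{-1}$ in place of $u$, and $v_1=1$, $v_2=u^{-1}$) the second term $\zeta(v,1,v^{-1}u^{-1})$ is uniformly close to $\zeta(v,1,v^{-1})=\Phi(v)$. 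For the first term I would prove the companion estimate that $\zeta(u,v,v^{-1}u^{-1})$ is uniformly close to $\zeta(u,1,u^{-1})=\Phi(u)$: writing $\Delta(u)=(u_1,\dots,u_n)$, the difference equals $\sum_{j=1}^{n}\phi(u_j)\bigl(\omega(u_{j+1}\cdots u_n v,\ v^{-1}u^{-1})-\omega(u_{j+1}\cdots u_n,\ u^{-1})\bigr)$, and for $j\le n-R$ the two pairs in the $j$-th summand have the same $r$-part and $c$-parts agreeing to depth at least $n-j-R$ (a bookkeeping computation with Claim \ref{claim:u v no cancellation delta}: passing from $(u_{j+1}\cdots u_n,\,u^{-1})$ to $(u_{j+1}\cdots u_n v,\,v^{-1}u^{-1})$ only modifies the $\Delta$-triangle far from its centre). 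As $\phi$ is bounded on pieces and $\omega$ is $\Delta$-continuous, summing the resulting bounds $s_{N}$ and absorbing the boundedly many terms with $j>n-R$ gives a uniform estimate. Combining the three estimates proves the claim.

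Finally I would deduce that $\theta$ is a quasimorphism. Given $g,h\in F$, let $d$ be the common $2$-path of $(g,h)$ and write $g=t_1^{-1}d$, $h=d^{-1}t_2$ as reduced words (Subsection \ref{subsec:triangles and quadrangles}); then $gh=t_1^{-1}t_2$, $g^{-1}=d^{-1}t_1$, $h^{-1}=t_2^{-1}d$ and $(gh)^{-1}=t_2^{-1}t_1$ are all reduced as written. Applying the claim once to each of $\Phi(g^{\pm1})$, $\Phi(h^{\pm1})$, $\Phi((gh)^{\pm1})$ expresses each of them, up to a uniformly bounded error, as a sum of two terms among $\Phi(t_1^{\pm1})$, $\Phi(t_2^{\pm1})$, $\Phi(d^{\pm1})$. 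Substituting into
\[
\theta(g)+\theta(h)-\theta(gh)=\bigl(\Phi(g^{-1})-\Phi(g)\bigr)+\bigl(\Phi(h^{-1})-\Phi(h)\bigr)-\bigl(\Phi((gh)^{-1})-\Phi(gh)\bigr)
\]
one checks that all twelve resulting terms cancel in pairs, so $|\delta^1\theta(g,h)|$ is bounded by a universal constant. Thus $\theta$ is a symmetric quasimorphism.

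The main obstacle is the companion estimate $\zeta(u,v,v^{-1}u^{-1})$ uniformly close to $\zeta(u,1,u^{-1})$: unlike the terms covered by Proposition \ref{prop:properties of zeta}(\ref{item:continuity of zeta}), here the middle argument $v$ cancels against the last argument $v^{-1}u^{-1}$, so the estimate is not a direct instance of anything already proved and must be obtained by the same $\Delta$-continuity bookkeeping that underlies Proposition \ref{prop:properties of zeta} — control of the decomposition near the junction via Claim \ref{claim:u v no cancellation delta}, followed by summation of the $\Delta$-continuity weights $(s_j)$. Everything else follows formally once this estimate, Proposition \ref{prop:rewrite_quasicocycle} and Proposition \ref{prop:properties of zeta} are available.
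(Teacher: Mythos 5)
Your proof is correct, but it takes a genuinely different route from the paper, and the comparison is instructive.

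The paper proves near-additivity of $\theta$ directly on reduced products (Claim \ref{claim:theta straight additive}), splits $\theta(vw)=\zeta(vw,w^{-1}v^{-1},vw)+\zeta(w^{-1}v^{-1},1,vw)$ with Proposition \ref{prop:properties of zeta}(\ref{item:splitting of zeta}), and then all four resulting terms land \emph{exactly} on the target shapes $\zeta(u,u^{-1},u)$ and $\zeta(u^{-1},1,u)$ covered by items (\ref{subitem:continuity1}) and (\ref{subitem:continuity2}). Symmetry of $\theta$ is then a separate explicit computation with the two algebraic identities $\omega(u,u^{-1}v)=-\omega(u^{-1},v)$ and $\omega(u,v)=-\omega(v^{-1},u^{-1})$ (Claim \ref{claim:theta symmetric}), and the quasimorphism bound is deduced from near-additivity and symmetry via the tripod $g=t_1^{-1}d$, $h=d^{-1}t_2$.

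You instead introduce $\Phi(g)=\zeta(g,1,g^{-1})$ and derive $\theta(g)=\Phi(g^{-1})-\Phi(g)$ from Proposition \ref{prop:rewrite_quasicocycle} and the vanishing $\omega(g^{-1},g)=0$, $\omega(\cdot,1)=0$. This is a genuinely slicker handling of symmetry — it makes $\theta(g^{-1})=-\theta(g)$ automatic — and your final tripod cancellation of the twelve $\Phi$-terms is correct. The price is that near-additivity of $\Phi$ does not sit inside what Proposition \ref{prop:properties of zeta} provides: after splitting $\Phi(uv)=\zeta(uv,1,v^{-1}u^{-1})$ via item (\ref{item:splitting of zeta}), the term $\zeta(v,1,v^{-1}u^{-1})$ is handled by item (\ref{subitem:continuity2}), but $\zeta(u,v,v^{-1}u^{-1})$ has its middle argument cancelling against the third, which is exactly what items (\ref{subitem:continuity1})–(\ref{subitem:continuity2}) exclude (they require $v_1uv_2$ reduced). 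You correctly flag this and sketch the needed companion estimate. I checked the bookkeeping and it does go through: for the two pairs $(u_{j+1}\cdots u_n v,\,v^{-1}u^{-1})$ and $(u_{j+1}\cdots u_n,\,u^{-1})$ both triangles degenerate onto an interval through $1$, the $r$-parts are empty in both cases, $(\underline{c}_1)$ and $(\underline{c}_3)$ agree, and $(\underline{c}_2)$ agrees to depth $n-R-j$ (using Claim \ref{claim:u v no cancellation delta} to identify the common prefix $(u_{j+1},\ldots,u_{n-R})$ of $\Delta(u_{j+1}\cdots u_n v)$ and $\Delta(u_{j+1}\cdots u_n)$), so $\Delta$-continuity gives a summable bound $s_{n-R-j}$ and the $\le R$ boundary terms contribute boundedly. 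In short: both proofs use the same two ingredients (Proposition \ref{prop:properties of zeta}(\ref{item:splitting of zeta}) and a $\Delta$-continuity estimate near the centre), but the paper chooses $\theta$ as the object to prove additivity for precisely so that Proposition \ref{prop:properties of zeta}(\ref{item:continuity of zeta}) applies verbatim, whereas your choice of $\Phi$ buys a cleaner symmetry argument at the cost of having to re-prove one more estimate of the same type.
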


\begin{proof}
We will first show the following claim:
\begin{claim} \label{claim:theta straight additive}
If $v, w \in F$ are such that $v w$ is reduced then $\theta(v w)$ is uniformly close to $\theta(v) + \theta(w)$.
\end{claim}
\begin{proof}
Note that $\theta(v w) =  \zeta(v w, w^{-1} v^{-1}, v w) + \zeta( w^{-1} v^{-1}, 1, v w)$. Using Proposition \ref{prop:properties of zeta} (\ref{item:splitting of zeta}) we see that $\theta(v w)$ is uniformly close to
$$
\zeta(w, w^{-1} v^{-1}, v w) + \zeta(v, v^{-1}, v w) + \zeta( w^{-1} ,v^{-1}, v w) + \zeta( v^{-1}, 1, v w).
$$
By item (\ref{item:continuity of zeta}) of the same proposition we see that
\begin{itemize}
\item $\zeta(w, w^{-1} v^{-1}, v w)$ is uniformly close to $\zeta(w, w^{-1} , w)$, for $u = w, v_1 = v, v_2 = \emptyset$, 
\item $\zeta(v, v^{-1}, v w)$ is uniformly close to $\zeta(v, v^{-1}, v)$, for $u=v, v_1 = \emptyset, v_2 = w$,
\item $\zeta( w^{-1} ,v^{-1}, v w)$ is uniformly close to $\zeta(w^{-1}, 1, w)$ for $u = w, v_1 = v, v_2 = \emptyset$ and
\item $\zeta( v^{-1}, 1, v w)$ is uniformly close $\zeta( v^{-1}, 1, v )$ for $u = v, v_1 = \emptyset, v_2 = w$.
\end{itemize}
Putting things together we see that $\theta(vw)$ is uniformly close to
$$
\Big( \zeta(v, v^{-1}, v) + \zeta( v^{-1}, 1, v ) \Big) + \Big( \zeta(w, w^{-1} , w) +  \zeta(w^{-1}, 1, w) \Big) = \theta(v) + \theta(w).
$$
\end{proof}

\begin{claim} \label{claim:theta symmetric}
The map $\theta \col F \to \R$ is symmetric i.e.\ $\theta(g) = - \theta(g^{-1})$ for all $g \in F$.
\end{claim}

\begin{proof}
We first need two easy properties of $\omega$. Note that $\omega$ is induced by a symmetric quasimorphism, say $\omega = \delta^1 \rho$ for some quasimorphism $\rho \col F \to \R$. We have that for all $u,v \in F$,
\begin{equation} \label{equ:alternating cocycle stuff}
\omega(u, u^{-1} v) = \rho(u) + \rho(u^{-1} v ) - \rho(v) = -\rho(u^{-1}) - \rho(v) + \rho(u^{-1} v) = -\omega(u^{-1},v).
\end{equation}
and
\begin{equation} \label{equ:alternating cocycle stuff 2}
\omega(u,v) = \rho(u) + \rho(v) - \rho(uv) = -\rho(u^{-1})-\rho(v^{-1}) - \rho(v^{-1} u^{-1}) = -\omega(v^{-1},u^{-1}).
\end{equation}
Fix $g \in F$ such that $\Delta(g) = (g_1, \ldots, g_k)$. Recall that in this case $\Delta(g^{-1}) = (g_k^{-1}, \ldots, g_1^{-1})$. Then
\begin{itemize}
\item $
\zeta(g,g^{-1},g) = \sum_{j=1}^k \phi(g_j) \omega(g_j^{-1} \cdots g_1^{-1}, g) 
= \sum_{j=1}^k \phi(g_j) \omega(g_1 \cdots g_j, g_{j+1} \cdots g_k)
$ using (\ref{equ:alternating cocycle stuff}) for $u = g_j^{-1} \cdots g_1^{-1}$ and $v = g_{j+1} \cdots g_k$. Similarly we see that
\item $\zeta(g^{-1},1,g) =  \sum_{j=1}^k \phi(g_j^{-1}) \omega(g_1 \cdots g_{j-1}, g_j \cdots g_k) = - \sum_{j=1}^k \phi(g_j) \omega(g_1 \cdots g_{j-1}, g_j \cdots g_k)$ using that $\phi$ is symmetric,
\item $\zeta(g^{-1},g,g^{-1}) = \sum_{j=1}^k \phi(g_j^{-1}) \omega(g_k^{-1} \cdots g_j^{-1}, g_{j-1}^{-1} \cdots g_1^{-1}) = \sum_{j=1}^k \phi(g_j) \omega(g_1 \cdots g_{j-1}, g_j \cdots g_k)$ where we used that $\phi$ is symmetric and (\ref{equ:alternating cocycle stuff 2}) and
\item $\zeta(g, 1, g^{-1}) = \sum_{j=1}^k \phi(g_j) \omega(g_k^{-1} \cdots g_{j}^{-1}, g_{j-1}^{-1} \cdots g_1^{-1}) = -\sum_{j=1}^k \phi(g_j) \omega(g_1 \cdots g_j, g_{j+1} \cdots g_k)$, where we used once more (\ref{equ:alternating cocycle stuff 2}).
\end{itemize}
We hence see that $\theta(g) + \theta(g^{-1}) = \zeta(g,g^{-1},g) +  \zeta(g^{-1},1,g) + \zeta(g^{-1},g,g^{-1}) + \zeta(g, 1, g^{-1}) = 0$ and $\theta$ is symmetric.
\end{proof}
We can now prove that $\theta$ is a quasimorphism. Let $g, h \in F$ and suppose that $d$ is the common $2$-path of $(g,h)$ i.e. $g = t_1^{-1} d$, $h = d^{-1} t_2$ as reduced words for some appropriate $t_1, t_2 \in F$.
Then, by Claim \ref{claim:theta straight additive} we have that $\theta(g) + \theta(h)$ is uniformly close to
$$
\theta(t_1^{-1}) + \theta(d) + \theta(d^{-1}) + \theta(t_2)
$$
and by Claim \ref{claim:theta symmetric}, $\theta(g) + \theta(h)$ is uniformly close to $\theta(t_1^{-1}) + \theta(t_2)$. By Claim \ref{claim:theta straight additive} again, $\theta(t_1^{-1}) + \theta(t_2)$ is uniformly close to $\theta(t_1^{-1} t_2) = \theta(gh)$. Hence $\theta(g) + \theta(h)$ is uniformly close to $\theta(gh)$ and hence $\theta$ is a quasimorphism.
\end{proof}

We will need the following Lemma:
\begin{lemma} \label{lemma:common-piece-cocycle}
Suppose $\rho \col F \to \R$ is a symmetric quasimorphism.
Define $\kappa \in C^2(F,\R)$ by $\kappa(g,h) = \rho(d)$ where $d$ is the common $2$-path of $(g,h)$.
Then $\delta^2 \kappa(g,h,i)$ is uniformly close to $-2 \rho(c)$ where $c$ is the common $3$-path of $(g,h,i)$.
\end{lemma}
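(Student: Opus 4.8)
The plan is a direct case analysis, following the three possible alignments of $g,h,i$ recorded in Subsection~\ref{subsec:triangles and quadrangles}. Expand
\[
\delta^2\kappa(g,h,i) = \kappa(h,i) - \kappa(gh,i) + \kappa(g,hi) - \kappa(g,h),
\]
recall that $\kappa(a,b) = \rho(d_{a,b})$ where $d_{a,b}$ denotes the common $2$-path of $(a,b)$, and observe that $d_{a,b}$ is simply the maximal suffix of the reduced word for $a$ that is cancelled in the product $ab$; equivalently, it is the middle term of the unique reduced factorisation $a = s^{-1}d_{a,b}$, $b = d_{a,b}^{-1}s'$, $ab = s^{-1}s'$. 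In each case I will read the four common $2$-paths off the given reduced forms, substitute, and then collapse the resulting alternating sum of values of $\rho$ to $-2\rho(c)$ up to an error bounded by a fixed multiple of the defect $D$ of $\rho$, using nothing more than the defect inequality $|\rho(xy) - \rho(x) - \rho(y)| < D$ and the symmetry $\rho(x^{-1}) = -\rho(x)$ (so, in particular, $\rho(1) = 0$).

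In Case A one has $g = t_1t_2$, $h = t_2^{-1}t_3t_4$, $i = t_4^{-1}t_5$, so $gh = t_1t_3t_4$, $hi = t_2^{-1}t_3t_5$ and the common $3$-path is trivial; a glance at the reduced forms gives $d_{g,h} = d_{g,hi} = t_2$ and $d_{h,i} = d_{gh,i} = t_4$, so $\delta^2\kappa(g,h,i) = 0$ on the nose. In Case B one has $g = t_1t_2t_3$, $h = t_3^{-1}t_4$, $i = t_4^{-1}t_2^{-1}t_5$, so $gh = t_1t_2t_4$, $hi = t_3^{-1}t_2^{-1}t_5$ and again the common $3$-path is trivial; here $d_{g,h} = t_3$, $d_{h,i} = t_4$, $d_{gh,i} = t_2t_4$, $d_{g,hi} = t_2t_3$, whence
\[
\delta^2\kappa(g,h,i) = \rho(t_4) - \rho(t_2t_4) + \rho(t_2t_3) - \rho(t_3),
\]
which lies within $2D$ of $\rho(t_4) - \rho(t_2) - \rho(t_4) + \rho(t_2) + \rho(t_3) - \rho(t_3) = 0 = -2\rho(1)$.

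Case C is the one that carries the content: here $g = t_1^{-1}ct_2$, $h = t_2^{-1}c^{-1}t_3$, $i = t_3^{-1}ct_4$ with common $3$-path $c$. The decisive point is that $gh = t_1^{-1}t_3$ and $hi = t_2^{-1}t_4$ no longer involve $c$, so although $d_{g,h} = ct_2$ and $d_{h,i} = c^{-1}t_3$ still carry the central segment, $d_{gh,i} = t_3$ and $d_{g,hi} = t_2$ do not. Hence
\[
\delta^2\kappa(g,h,i) = \rho(c^{-1}t_3) - \rho(t_3) + \rho(t_2) - \rho(ct_2),
\]
and applying the defect inequality to $\rho(c^{-1}t_3)$ and to $\rho(ct_2)$, and then $\rho(c^{-1}) = -\rho(c)$, this is seen to be within $2D$ of $-\rho(c) + \rho(t_3) - \rho(t_3) + \rho(t_2) - \rho(c) - \rho(t_2) = -2\rho(c)$. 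Since the error is bounded by $2D$ in all three cases, uniformly in $g,h,i$, the lemma follows.

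The part I expect to require the most care is the cancellation bookkeeping in Case C: one must verify precisely that the two \emph{diagonal} common $2$-paths $d_{g,h}$, $d_{h,i}$ retain the central piece $c$ while the two \emph{side} common $2$-paths $d_{gh,i}$, $d_{g,hi}$ lose it, since it is exactly this asymmetry that produces the coefficient $2$. One should also confirm that the reduced-word identities above remain valid in the degenerate configurations in which some $t_j$ (or $c$) is trivial; but as every estimate passes through the uniform defect bound, these require no separate treatment.
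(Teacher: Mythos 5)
Your proof is correct and follows essentially the same route as the paper's: the same three-case alignment analysis, the same reading-off of the four common $2$-paths in each case, and the same appeal to the defect inequality together with $\rho(c^{-1})=-\rho(c)$ to collapse Case C to $-2\rho(c)$ within $2D$. Your additional remark that the common $3$-path is trivial in Cases A and B (so $-2\rho(c)=0$ there) is a nice way to phrase the result uniformly, but it is the same argument as in the paper.
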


\begin{proof}
We have to evaluate
\[
\delta^2 \kappa(g,h,i) = \kappa(h,i) - \kappa(gh,i) + \kappa(g,hi) - \kappa(g,h).
\]
For what follows we will use the different cases of how $g$, $h$ and $i$ can be aligned in the Cayley graph of $F$ as seen in Figure \ref{fig:Figure4}.
\begin{enumerate}
\item (see Figure \ref{fig:Figure4_1}): In this case there are elements $t_1, \ldots, t_5$ such that
 $g = t_1 t_2$, $h = t_2^{-1} t_3 t_4$, $i = t_4^{-1} t_5$ as reduced words.
It follows that 
\begin{itemize}
\item $t_4$ is the common $2$-path of $(h,i)$,
\item $t_4$ is the common $2$-path of $(gh,i)$,
\item $t_2$  is the common $2$-path of $(g,hi)$ and
\item $t_2$  is the common $2$-path of $(g,h)$.
\end{itemize}
Hence $\delta^2 \kappa(g,h,i) = \rho(t_4) - \rho(t_4) + \rho(t_2) - \rho(t_2) = 0$.
\item (see Figure \ref{fig:Figure4_2}): In this case there are elements $t_1, \ldots, t_5$ such that
 $g = t_1 t_2 t_3$, $h = t_3^{-1} t_4$, $i = t_4^{-1} t_2^{-1} t_5$ as reduced words.
It follows that 
\begin{itemize}
\item $t_4$ is the common $2$-path of $(h,i)$,
\item $t_4 t_2$ is the common $2$-path of $(gh,i)$,
\item $t_2 t_3$  is the common $2$-path of $(g,hi)$ and
\item $t_3$  is the common $2$-path of $(g,h)$.
\end{itemize}
Hence $\delta^2 \kappa(g,h,i) = \rho(t_4) - \rho(t_4 t_2) + \rho(t_2 t_3) - \rho(t_3)$ which is uniformly bounded as $\rho$ is a quasimorphism.
\item (see Figure \ref{fig:Figure4_3}): In this case there are elements $t_1, \ldots, t_4$ and $c$ such that
$g = t_1^{-1} c t_2$, $h = t_2^{-1} c ^{-1} t_3$, $i = t_3^{-1} c t_4$ as reduced words.
It follows that 
\begin{itemize}
\item $c^{-1} t_3$ is the common $2$-path of $(h,i)$,
\item $t_3$ is the common $2$-path of $(gh,i)$,
\item $t_2$  is the common $2$-path of $(g,hi)$ and
\item $c t_2$  is the common $2$-path of $(g,h)$.
\end{itemize}
Hence $\delta^2 \kappa(g,h,i) = \rho(c^{-1} t_3) - \rho(t_3) + \rho(t_2) - \rho(c t_2)$ which is uniformly close to $-2 \rho(c)$.
This shows Lemma \ref{lemma:common-piece-cocycle}.
\end{enumerate}
\end{proof}

Finally, we can prove Theorem \ref{thm:technical}.
By Proposition \ref{prop: tau plus delta eta unif close to theta}, $\phi(g) \omega(h,i) + \delta^2 \eta(g,h,i)$ is uniformly close to $\zeta(c, c^{-1}, c) + \zeta(c^{-1}, 1, c)=\theta(c)$ where $c$ is the common $3$-path of $(g,h,i)$ and $\theta \col F \to \R$ is like in Proposition \ref{prop:theta symmetric quasimorph}.
Define $\gamma \in C^2(F, \R)$ via $\gamma(g,h) = \theta(d)/2$ where $d$ is the common $2$-path of $(g,h)$. Observe that $\rho \col g \mapsto \theta(g)/2$ is a symmetric quasimorphism by Proposition \ref{prop:theta symmetric quasimorph}.  Using Lemma \ref{lemma:common-piece-cocycle}, we see that $\delta^2 \gamma(g,h,i)$ is uniformly close to $-\theta(c)$ where $c$ is the common $3$-path of $(g,h,i)$.
Hence $\phi(g) \omega(h,i) + \delta^2 \eta(g,h,i)+\delta^2 \gamma(g,h,i)$ is uniformly bounded.

\subsection{Proof of Theorems \ref{theorem:brooks and rolli} and \ref{thm:main}} \label{subsec:theorems a and b}

Here we will prove Theorems \ref{theorem:brooks and rolli} and \ref{thm:main} by providing an explicit bounded primitive for the respective cup products.

\begin{reptheorem}{thm:main}
Let $\Delta$ be a decomposition of $F$, let $\phi$ be a $\Delta$-decomposable quasimorphism and let $\psi$ be $\Delta$-continuous. Then $[\delta^1 \phi] \smile [\delta^1 \psi] \in \Hrm_b^4(F, \R)$ is trivial.
The bounded primitive is given by $\beta$, as in Theorem \ref{thm:technical} for $\omega = \delta^1 \psi$.
\end{reptheorem}

\begin{proof}
By Theorem \ref{thm:technical} we know that $\beta$ defined by setting $\beta \col (g,h,i) \mapsto \phi(g)  \delta^1 \psi(h,i) + \delta^2 \eta(g,h,i) + \delta^2 \gamma(g,h,i)$ is bounded, as $\delta^1 \psi(h,i)$ is a symmetric $\Delta$-continuous cocycle.
Then we calculate
$$
\delta^3 \beta(g,h,i,j) = \delta^1 \phi(g,h) \smile \delta^1 \psi(i,j).
$$
Hence $\beta$ is a bounded primitive for the cup product.
\end{proof}

Finally, we can prove Theorem \ref{theorem:brooks and rolli}.
\begin{reptheorem}{theorem:brooks and rolli}
Let $\phi, \psi \col F \to \R$ be two quasimorphisms on a non-abelian free group $F$ where each of $\phi$ and $\psi$ is either Brooks counting quasimorphisms on a non self-overlapping word or quasimorphisms in the sense of Rolli. 
Then $[\delta^1 \phi] \smile [\delta^1 \psi] \in \Hrm^4_b(F,\R)$ is trivial.
\end{reptheorem}

\begin{proof}
First suppose that both $\phi$ and $\psi$ are Brooks quasimorphisms. Suppose that $\phi$ is counting the non-overlapping word $w \in F$. Let $\Delta_w$ be the decomposition described in Example \ref{exmp:brooks decomposition}. By Example \ref{exmp:rolli quasimorphism}, we have that $\phi$ is $\Delta_w$-decomposable.
Moreover, by Proposition \ref{prop: continuous quasimorphsms}, $\psi$ is $\Delta_w$-continuous. We conclude by Theorem \ref{thm:main}.

If not both $\phi$ and $\psi$ are Brooks quasimorphisms then assume without loss of generality that $\phi$ is a quasimorphism in the sense of Rolli and $\psi$ is either a Brooks quasimorphism or a quasimorphism in the sense of Rolli. Let $\Delta_{rolli}$ be the decomposition described in Example \ref{exmp:rolli decomposition}.
Note that $\phi$ is $\Delta_{rolli}$-decomposable. If $\psi$ is a quasimorphism in the sense of Rolli, then $\psi$ is $\Delta_{rolli}$-decomposable and hence $\Delta_{rolli}$-continuous by Proposition \ref{prop: continuous quasimorphsms}. If $\psi$ is a Brooks quasimorphism then by the same proposition we see that $\psi$ is also $\Delta_{rolli}$-continuous. Again we may conclude by applying Theorem \ref{thm:main}.
\end{proof}

\bibliographystyle{alpha}
\bibliography{bib_cup}
\end{document}